\newtheorem{theorem}{Theorem}
\numberwithin{theorem}{section}
\newtheorem{proposition}[theorem]{Proposition}
\newtheorem{corollary}[theorem]{Corollary}
\newtheorem{remark}[theorem]{Remark}
\newtheorem{example}[theorem]{Example}
\newtheorem{conjecture}[theorem]{Conjecture}
\theoremstyle{definition}
\newcommand{\PP}{\mathbb{P}}
\newcommand{\RR}{\mathbb{R}}
\newcommand{\QQ}{\mathbb{Q}}
\newcommand{\CC}{\mathbb{C}}
\newcommand{\NN}{\mathbb{N}}
 \title{Tangent Quadrics in Real 3-Space}
  \author{Taylor Brysiewicz}
  \address{%
  Department of Applied and Computational Mathematics and Statistics, University of Notre Dame \\
\email{tbrysiew@nd.edu}
}
  \author{Claudia Fevola}
  \address{%
  MPI for Mathematics in the Sciences, Leipzig \\
\email{Claudia.Fevola@mis.mpg.de}
}
\author{Bernd Sturmfels}
\address{%
  MPI for Mathematics in the Sciences, Leipzig \\
\email{bernd@mis.mpg.edu }
}
 \date{2020/10/11}
\begin{document}
\maketitle
\begin{abstract}
\noindent 
We examine quadratic surfaces in $3$-space that are
tangent to nine given figures. These figures can be points, lines, planes or quadrics.
The numbers of tangent quadrics were determined by
Hermann Schubert in 1879. We study the associated systems of polynomial equations,
also in the space of complete quadrics,
and we solve them using certified numerical methods.
Our aim is to show that  Schubert's problems are fully real. 
\end{abstract}

\section{Introduction}

There are $3264$ conics tangent to five given conics in
the projective plane $\PP^2$, and there exist five explicit conics
so that all $3264$ complex solutions are real \cite{notices, ronga}.
We here study such tangency questions in
one dimension higher. We consider quadrics
(i.e.~quadratic surfaces) in $\PP^3$.  Schubert \cite{schubert}
found that there are $666841088$ quadrics tangent to nine given 
quadrics in $\PP^3$. Our ultimate goal is to decide whether
there exist  nine real quadrics so that all complex solutions are real.
In this article we present first steps towards answering that question.

\begin{figure}[ht]
  \centering
  \includegraphics[scale=0.55]{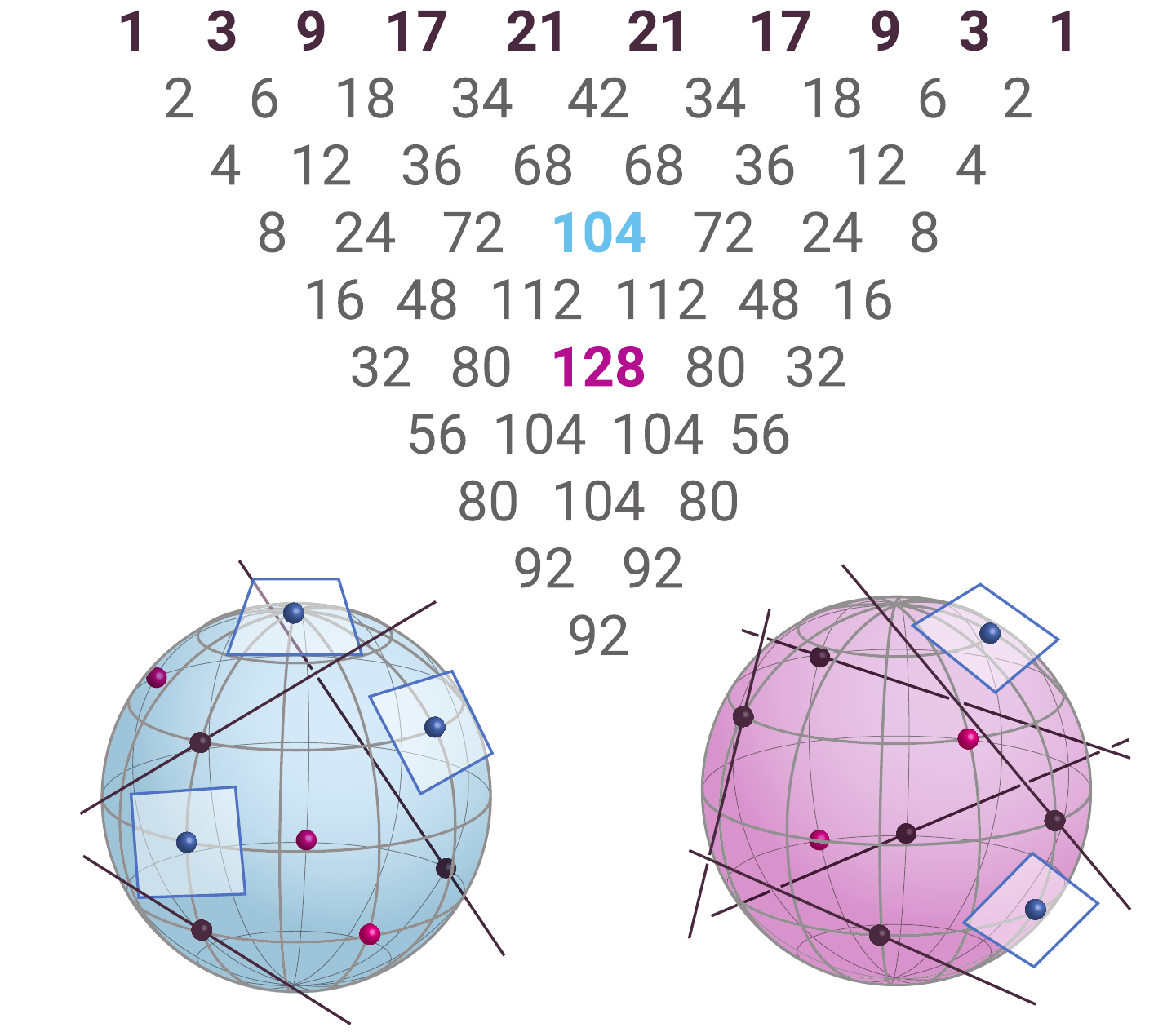} \vspace{-0.09in}
  \caption{Schubert's triangle for tangency of quadrics in 3-space.}
  \label{fig:triangle}
\end{figure}

Our study centers around {\em Schubert's triangle} which is displayed
in Figure \ref{fig:triangle}. For each triple $(\alpha,\beta,\gamma) \in \NN^3$
with $\alpha+\beta+ \gamma =9$, the triangle shows the number $p^\alpha \ell^\beta h^\gamma$
of quadrics that pass through $\alpha$ given points, are tangent to $\beta$ given lines,
and are tangent to $\gamma$ given planes. The two pictures, in blue and red, illustrate the 
geometric meaning of the entries
$p^3 \ell^3 h^3 = {\color{blue} 104}$ and $p^2 \ell^5 h^2 = {\color{red} 128}$.
% We shall present explicit instances such that all
% {\color{blue} 104} resp.~{\color{red} 128} quadrics are real.

Schubert derives these numbers in
\cite[\S 22]{schubert}. In
\cite[page 106]{schubert} he argues as follows. Quadrics degenerate into 
complete flags, consisting of a point on a line in a plane in $\PP^3$.
 Such a flag counts  with multiplicity two, 
 since $q = 2 (p+\ell+h)$,
 by Proposition \ref{prop:hurwitz}.
  We seek quadrics that satisfy
one of the three tangency conditions, for each of the nine given flags. The number of
such quadrics equals
\begin{equation} \label{eq:q^9}
q^9  = \,\, 2^9 \!\!\!\!\! \sum_{\alpha{+}\beta{+}\gamma=9} \begin{matrix}
 \frac{9!}{ \alpha ! \beta ! \gamma !} \end{matrix} \cdot p^\alpha \ell^\beta h^\gamma
\,\, = \,\, 2^9 (\, \cdots + 1680 \cdot 
{\color{blue} 104} +  \cdots + 756 \cdot {\color{red}128} + \cdots \,).
 \vspace{-0.07in}
\end{equation}
The second equation is  in the cohomology ring of the space of complete quadrics.
In (\ref{eq:q^9})  we multiply each entry in Schubert's triangle with the corresponding
trinomial coefficient $ \frac{9!}{\alpha ! \beta! \gamma!}$, we add up the products, we
multiply the sum by $2^9 = 512$, and we obtain
 $q^9 = 666841088$.
This derivation is the analogue in $\PP^3$ of the pentagon count 
for the $2^5 (p+\ell)^5 = 3264$ conics in \cite[Figure~3]{notices}.

Schubert's calculus predicts the number of complex solutions to a system of 
polynomial equations that depend on geometric figures like lines and planes in $\PP^3$.
In this article we study these polynomial equations and present
practical tools for solving them. Our main interest is in solutions over 
the real numbers~$\RR$.

This paper is organized as follows. In Section \ref{sec2}
we introduce coordinates for points, lines, planes and quadrics,
and derive the polynomials that describe our tangency conditions.
Section \ref{sec3} is dedicated to the space of complete quadrics, a variety
in $\PP^9 \times \PP^{20} \times \PP^9$. We determine its prime ideal,
we recover Schubert's triangle as its multidegree, 
and we write our tangency conditions in that setting.

In Section \ref{sec4} we argue that Schubert's triangle is mostly real.
We present explicit instances where all tangent quadrics are real.
These instances were found by substantial computations using the
software {\tt HomotopyContinuation.jl} \cite{julia}. Our computations
and the certification process are described in Section~\ref{sec5}.

In Section \ref{sec6} we turn to Schubert's pyramid. It gives the numbers
 $p^\alpha \ell^\beta h^\gamma q^\delta$
of quadrics through $\alpha $ points that are
tangent to $\beta$ lines, $\gamma$ planes and $\delta$ quadrics;
see Figure~\ref{fig:levels}.
At the top of this pyramid lives
 $q^9 = 666841088$.
We discuss the associated polynomial systems
and we state two conjectures about their~reality.

\section{Coordinates and Equations}
\label{sec2}

We begin with the coordinates that describe
our geometric figures. A point $P$ in $\PP^3$ 
is represented by a vector $p = (p_1,p_2,p_3,p_4)$.
A line can be given by~a $2 \times 4$ matrix $L$,
and a plane by a $3 \times 4$ matrix $H$.
We often use Pl\"ucker coordinates
$$  
 \vspace{-0.04in}
\ell = (\ell_{12},\ell_{13},\ell_{14}, \ell_{23}, \ell_{24}, \ell_{34}) \quad {\rm and} \quad
h = (h_{234}, -h_{134}, h_{124}, -h_{123}). $$
Here $\ell_{ij}$ is the $2 \times 2$ minor of $L$
with column indices $i$ and $j$.
Note the {\em Pl\"ucker relation} $\,\ell_{12} \ell_{34} - \ell_{13} \ell_{24} + \ell_{14} \ell_{23} = 0$.
 Likewise
$h_{ijk}$ denote the $3 \times 3$ minors of $H$.

\begin{remark}  \label{rmk:flag}
Inclusion relations are written in Pl\"ucker coordinates as follows:
$$ \begin{matrix}
P \subset H: & &  
p_1 h_{234} - p_2 h_{134} + p_3 h_{124} - p_4 h_{123} , \smallskip \\
P \subset L: & &  \begin{matrix} 
p_1 \ell_{23} - p_2 \ell_{13} + p_3 \ell_{12} \, ,\,\,
p_1 \ell_{24} - p_2 \ell_{14} + p_4 \ell_{12}, \\
p_1 \ell_{34} - p_3 \ell_{14} + p_4 \ell_{13}\,,\,
p_2 \ell_{34} - p_3 \ell_{24} + p_4 \ell_{23}, \end{matrix}  \smallskip \\
L \subset H: & & \begin{matrix}
 \ell_{12} h_{134} - \ell_{13} h_{124} + \ell_{14} h_{123}\,,\,\,
\ell_{12} h_{234} - \ell_{23} h_{124} + \ell_{24} h_{123},\\
\ell_{13} h_{234} - \ell_{23} h_{134} + \ell_{34} h_{123}\,,\,\,
\ell_{14} h_{234} - \ell_{24} h_{134} + \ell_{34} h_{124}. \end{matrix}
\end{matrix}
$$
A triple $(P,L,H)$ satisfying $P \subset L \subset H$ is called a {\em complete flag}.
The variety of complete flags is irreducible of dimension six
in $\PP^3 \times \PP^5 \times \PP^3$.
The prime ideal of this {\em flag variety} is generated by the
nine quadrics above, together with the Pl\"ucker relation.
These ten generators form a Gr\"obner basis
\cite[Theorem~14.6]{cca}.
\end{remark}

Each quadric in $\PP^3$ is represented by a symmetric $4 \times 4$ matrix  $X = (x_{ij})$.
The point $P$ lies on the quadric $X$ if $ P X P^T = 0$.
Similarly, the condition for $X$ to be tangent to a line $L$ or to a plane $H$ is
given by the vanishing of the polynomial
\begin{equation}
\label{eq:wedge23}
{\rm det}(L X L^T) \,= \, \ell (\wedge_2 X) \ell^T   \quad {\rm or} \quad
{\rm det}(H X H^T) \,= \, h (\wedge_3 X) h^T  .
\end{equation}
Here $\wedge_i X$ denotes the $i$-th exterior power of the $4 \times 4$ matrix $X$.
The entries of $\wedge_i X$ are  the $i \times i$ minors of $X$.
The rows and columns are
labeled so that (\ref{eq:wedge23}) holds.

Suppose we are given $\alpha$ points $P_i$,
$\beta $ lines $L_j$, and $\gamma$ planes $H_k$, all generic,
where $\alpha + \beta + \gamma = 9$.
We wish to solve these nine homogeneous equations for $X$:
\begin{equation} \label{eq:schub}
\! P_i X P_i^T \,= \,
{\rm det}(L_j X L_j^T)  \,=\,
{\rm det}(H_k X H_k^T)  \,= \, 0
\,\,\,
{\rm for} \,\,
1 \leq i \leq \alpha,\,
1 \leq j \leq \beta,\,
1 \leq k \leq \gamma.
\end{equation}
Here $X$ is an unknown symmetric $4 \times 4$ matrix,
viewed as a point in $\PP^9$, that satisfies ${\rm det}(X) \not= 0$.
 B\'ezout's Theorem suggests that the number of 
complex solutions to (\ref{eq:schub}) equals $1^\alpha 2^\beta 3^\gamma$.
This number is correct when $\alpha \geq 4$ and $\gamma \leq  2$.
In all other cases,
 the equations (\ref{eq:schub}) have extraneous solutions
that are removed by saturation with respect to the  ideal
$\langle {\rm det}(X) \rangle$.
This saturation step~can be carried out in {\tt Macaulay2} \cite{M2}.
For each choice of $(\alpha,\beta,\gamma)$,
we obtain a Gr\"obner basis that reveals the number of
solutions in $\PP^9$. This computation proves the correctness
of Schubert's triangle. For solving  (\ref{eq:schub})
numerically, see Section~\ref{sec5}.

\smallskip

We next discuss the condition for $X$ to be tangent to a fixed quadric $U= (u_{ij})$.
\begin{lemma} \label{lem:bigdisc}
The condition for two quadrics $U$ and $X$ to be tangent  in $\PP^3$
is given by the discriminant of the  quartic
$f(t) = {\rm det}(U + t X)$. This is an 
 irreducible polynomial with
$67753552$ terms of degree $(12,12)$ in the $20$ unknowns $u_{ij},\,x_{ij}$.
 \end{lemma}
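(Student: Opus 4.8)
The plan is to establish three separate facts: first, that tangency of two smooth quadrics is correctly captured by the discriminant of the pencil $f(t) = \det(U + tX)$; second, that this discriminant is irreducible as a polynomial in the $20$ variables $u_{ij}, x_{ij}$; and third, the bookkeeping claims (bidegree $(12,12)$ and the term count). I will take the second as the main content.

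For the geometric characterization, recall that two smooth quadrics $U$ and $X$ in $\PP^3$ are tangent precisely when their intersection curve $\{U = X = 0\}$ is singular, equivalently when the pencil $\lambda U + \mu X$ contains a quadric that is singular along a point of $U \cap X$ --- this is the classical fact that the intersection of two quadric surfaces is smooth iff the binary quartic $\det(\lambda U + \mu X)$ has four distinct roots. Dehomogenizing by setting $\mu = 1$ (legitimate on the open locus where $X$ is smooth, and the boundary cases are handled by symmetry or by the degree count below), tangency is the condition that $f(t) = \det(U + tX)$ has a repeated root, i.e. $\mathrm{disc}_t(f) = 0$. So the tangency hypersurface is the zero locus of $D := \mathrm{disc}_t\bigl(\det(U + tX)\bigr)$, which is a polynomial in the entries of $U$ and $X$. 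This part is standard and I would cite it rather than reprove it.

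The bidegree is a direct computation: $f(t) = \det(U + tX)$ is a polynomial of degree $4$ in $t$ whose coefficient of $t^k$ is a polynomial that is homogeneous of degree $4-k$ in the $u_{ij}$ and degree $k$ in the $x_{ij}$; in particular every coefficient has total degree $4$, balanced between $u$ and $x$ according to the power of $t$. The discriminant of a quartic $a_0 + a_1 t + \cdots + a_4 t^4$ is a homogeneous polynomial of degree $6$ in $(a_0,\dots,a_4)$, isobaric of weight $12$ when $a_k$ is assigned weight $k$. Substituting, $D$ is homogeneous of degree $6 \cdot 4 = 24$ overall, and the weight-$12$ isobaric condition translates exactly into: degree $12$ in the $x_{ij}$ (the "$t$-weight") and hence degree $24 - 12 = 12$ in the $u_{ij}$, giving bidegree $(12,12)$. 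By the $U \leftrightarrow X$ symmetry of the tangency condition (tangency is symmetric in the two quadrics), the balanced bidegree is also forced a priori. The term count $67753552$ and the earlier count $666841088$-adjacent number $67753552$ I would simply report as the output of expanding $D$ symbolically in a computer algebra system such as \texttt{Macaulay2} \cite{M2}; this is not something to verify by hand.

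The hard part is irreducibility of $D$ in $\mathbb{C}[u_{ij}, x_{ij}]$. Here is the approach I expect to work. Consider the incidence variety $\Sigma = \{(U, X, [t]) : \det(U + tX) \text{ has a double root at } t\} \subset \PP^9 \times \PP^9 \times \PP^1$, or better the variety of triples where $U + tX$ is a singular quadric and the pencil point is recorded. Projection to $\PP^1$ exhibits $\Sigma$ (on the relevant component) as a fiber bundle over $\PP^1$ whose fiber over $[t_0]$ is the set of pairs $(U,X)$ with $\det(U + t_0 X) = 0$ and $\mathrm{rank}(U + t_0 X) \le 3$ in the expected codimension --- this fiber is irreducible because the locus of singular quadrics (the discriminant hypersurface in $\PP^9$) is irreducible, and the conditions cut out an irreducible bundle over it. Hence $\Sigma$ is irreducible, and so is its image under $\Sigma \to \PP^9 \times \PP^9$, which is the tangency hypersurface $V(D)$. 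Since the tangency hypersurface is irreducible and $D$ vanishes on it to order one along its generic point (the discriminant of a one-parameter family of quartics has a reduced zero locus at a generic simple double root), $D$ is irreducible up to scalar. The main obstacle is making the "expected codimension / generic reducedness of $D$" steps rigorous --- in particular ruling out that $D$ acquires an extraneous square factor or a spurious component supported where $X$ or $U$ is itself singular; I would handle this either by a direct tangent-space computation at one explicit point of $V(D)$ (showing $D$ is not a square there), or by noting that $\det D$ restricted to a generic two-dimensional affine slice is an irreducible plane curve, which can be checked symbolically.
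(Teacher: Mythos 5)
Your proposal is correct in substance and reaches the lemma by a genuinely different route than the paper. The paper identifies tangency with the vanishing of $\mathrm{disc}_t\,\det(U+tX)$ via the Cayley trick of elimination theory: the intersection curve is singular if and only if the line spanned by $U$ and $X$ in $\PP^9$ meets the hypersurface $\{\det=0\}$ non-transversally, so the tangency condition is the Hurwitz form of that hypersurface; the degree and the term count are then read off from a {\tt Maple} expansion, and irreducibility is carried along with the Hurwitz-form interpretation rather than argued separately. You instead invoke the classical pencil-of-quadrics fact (the base curve is smooth iff the binary quartic $\det(\lambda U+\mu X)$ has distinct roots), derive the bidegree $(12,12)$ cleanly from the isobaric weight of the quartic discriminant --- a computation the paper does not spell out --- and attack irreducibility head-on with an incidence correspondence over $\PP^1$. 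That last part is where your write-up gives more than the paper's one-line appeal, but it also needs a repair: the fiber over $[t_0]$ is not ``$\det(U+t_0X)=0$ and $\mathrm{rank}(U+t_0X)\le 3$'' (these are the same condition); a double root at $t_0$ means $\det(U+t_0X)=0$ together with $\mathrm{tr}\bigl(\mathrm{adj}(U+t_0X)\,X\bigr)=0$. With that correction the fiber is irreducible: over the rank-$3$ locus the second equation is a nonzero linear condition on $X$, giving an irreducible bundle over the irreducible determinantal hypersurface, while the locus over rank $\le 2$ has dimension $15<16$ and so cannot be a component of a set cut out by two equations. For the reducedness step you flag as the main obstacle, your own parenthetical remark can be made precise and suffices: for fixed $X$ with $\det X\neq 0$, the map $U\mapsto(\text{coefficients of }f)$ is a submersion onto quartics with fixed leading coefficient, and the quartic discriminant is reduced and its zero locus smooth at a quartic with exactly one simple double root, so $D$ vanishes to order one generically and no symbolic slice computation is needed. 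The term count, here as in the paper, can only be reported as computer output.
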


\begin{proof}
The tangency condition means that the intersection curve of the
 quadrics $U$ and $X$ is singular in $\PP^3$.
By the Cayley trick of elimination theory \cite[\S 3.2.D]{GKZ}, this is singular if and only if
the line spanned by $U$ and $X$ in $\PP^9$ is tangent to
the hypersurface $\{{\rm det}(X) = 0\}$. That condition is given by the
discriminant of $f(t)$, which is  known
as the {\em Hurwitz form} of $\{{\rm det}(X) = 0\}$. 
We found~its expansion into $67753552$ monomials
 with the computer algebra system {\tt Maple}.
\end{proof}

We denote the above discriminant by $\Sigma(U,X)$.
If $U$ is a symmetric matrix with random entries in $\RR$ or $\CC$ then
$\Sigma(U,X)$ is a polynomial of degree $12$ in
ten unknowns $x_{ij}$ with $241592$ terms.
Given nine quadrics $U_1,\ldots,U_9$ in $\PP^3$, the quadrics
tangent to these solve the following equations in $\PP^9$:
\begin{equation}
\label{eq:hardsystem}
 \Sigma(U_1,X) \, = \,
\Sigma(U_2,X) \, = \, \,\cdots =\, \,\Sigma(U_9,X) \, = \, 0 \quad {\rm and} \quad
{\rm det}(X) \not= 0. 
\end{equation}
B\'ezout's Theorem suggests that the nine equations have $12^9$
complex solutions, but the inequation decreases that number to
  $q^9 = 666841088$.
We derived  this in the Introduction from
Figure~\ref{fig:triangle}. A key ingredient was the identity
$q = 2 (p+\ell+h)$.

We next prove this identity by an explicit geometric degeneration.
Let $V$ be an invertible real $4 \times 4$ matrix,
and let $P \subset L \subset H$ be the flag given by its first
three rows. We introduce a parameter $\epsilon > 0$,
and we consider the quadric defined by 
\begin{equation}
\label{eq:deformation}
 U_\epsilon \,\,=\,\, V^{-1} \cdot {\rm diag}(\epsilon^3,\epsilon^2,\epsilon,1) \cdot (V^{-1})^T . 
 \end{equation}
We investigate the behavior of the tangency condition for $U_\epsilon$
and $X$, as $\epsilon \rightarrow 0$.

\begin{proposition} \label{prop:hurwitz}
The leading form in $\epsilon$ of the specialized
Hurwitz form equals
\begin{equation}
\label{eq:hurwitzfac}
\Sigma(U_\epsilon , X) \,\, = \,\, 
(P X P^T)^2 \cdot\, {\rm det}(L X L^T)^2 \cdot \, {\rm det}(HX H^T)^2 \cdot \epsilon^8 \,\,+\,\,
\hbox{higher terms in $\epsilon$}.
\end{equation}
This implies the identity 
$q = 2 (p+\ell+h)$ in the appropriate cohomology ring.
\end{proposition}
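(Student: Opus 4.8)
The plan is to compute the Hurwitz form $\Sigma(U_\epsilon, X)$ explicitly enough to extract its leading $\epsilon$-behavior, and then read off the cohomological identity from the resulting factorization. First I would exploit the $\mathrm{GL}_4$-equivariance of the whole setup: under $X \mapsto V^T X V$ the point $P$, line $L$ and plane $H$ transform by their natural Pl\"ucker actions, while $U_\epsilon \mapsto \mathrm{diag}(\epsilon^3,\epsilon^2,\epsilon,1)$. Since the tangency condition and all three factors on the right of \eqref{eq:hurwitzfac} are equivariant, it suffices to prove the statement for the \emph{standard flag} $e_1 \subset \langle e_1,e_2\rangle \subset \langle e_1,e_2,e_3\rangle$ and the diagonal quadric $D_\epsilon = \mathrm{diag}(\epsilon^3,\epsilon^2,\epsilon,1)$. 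In these coordinates $PXP^T = x_{11}$, $\det(LXL^T) = x_{11}x_{22}-x_{12}^2$, and $\det(HXH^T)$ is the upper-left $3\times 3$ minor of $X$, so the claimed leading form is a very concrete polynomial in the $x_{ij}$.

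Next I would write $f(t) = \det(D_\epsilon + tX)$ and analyze its discriminant $\Sigma(U_\epsilon,X) = \mathrm{disc}_t\, f(t)$ as $\epsilon \to 0$. The key observation is that rescaling $t$ absorbs powers of $\epsilon$: writing $D_\epsilon = E_\epsilon^2$ with $E_\epsilon = \mathrm{diag}(\epsilon^{3/2},\epsilon,\epsilon^{1/2},1)$ and conjugating, $f(t)$ becomes $\det(E_\epsilon)^2 \det(I + t\, E_\epsilon^{-1} X E_\epsilon^{-1})$, and the matrix $E_\epsilon^{-1} X E_\epsilon^{-1}$ has entries scaling by definite powers of $\epsilon$. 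One then tracks how the four roots of $f$ (equivalently the eigenvalues of $-E_\epsilon^{-1}X E_\epsilon^{-1}$, up to the $\det E_\epsilon^2$ prefactor) behave: as $\epsilon \to 0$ they split into groups of distinct magnitudes governed by the nested minors $x_{11}$, $x_{11}x_{22}-x_{12}^2$, and the $3\times 3$ minor. The discriminant, being $\prod_{i<j}(t_i - t_j)^2$ times the appropriate leading coefficient squared, then factors in the limit into exactly the squared differences coming from roots in different magnitude classes, which produces the three squared minors; a careful bookkeeping of the $\epsilon$-exponents across all $\binom{4}{2}=6$ pairs yields the total exponent $\epsilon^8$. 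Alternatively, and perhaps more cleanly, one can invoke the Cayley-trick description from Lemma \ref{lem:bigdisc}: $\Sigma(U_\epsilon,X)$ vanishes exactly when the line through $D_\epsilon$ and $X$ in $\PP^9$ is tangent to $\{\det = 0\}$, and as $\epsilon\to 0$ the point $D_\epsilon$ limits to the rank-one matrix $e_1e_1^T$, whose osculating behavior at $\{\det=0\}$ forces the three conditions $x_{11}=0$, $x_{11}x_{22}-x_{12}^2 = 0$, $\det(\text{3-minor}) = 0$ — each with multiplicity two because $D_\epsilon$ approaches along a curve that is itself tangent to the successive singular strata.

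For the cohomological conclusion, I would argue as follows. The class $q$ of the tangency-to-a-quadric condition on the space of complete quadrics is the class of the divisor $\{\Sigma(U,X)=0\}$; specializing $U$ to the degenerate family $U_\epsilon$ and passing to the leading term exhibits this divisor, in the limit, as the cycle $2[p] + 2[\ell] + 2[h]$, where $p,\ell,h$ are the classes of the conditions ``$X$ passes through $P$'', ``$X$ tangent to $L$'', ``$X$ tangent to $H$'' pulled back from the flag data. Since rational (even algebraic) equivalence is preserved under such flat degenerations, and since the leading form in \eqref{eq:hurwitzfac} is literally the product of the defining equations of these three conditions each squared, we get $q = 2(p + \ell + h)$ in the cohomology (Chow) ring. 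This is exactly the identity used in \eqref{eq:q^9} to derive $q^9 = 666841088$.

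I expect the main obstacle to be the $\epsilon$-exponent bookkeeping in the discriminant: one must verify not only that the three squared minors appear but that \emph{nothing else} appears in the leading coefficient (i.e.\ the remaining factor is a nonzero constant, not another polynomial in $x$ that could further lower the class), and that the exponent is precisely $8$ and not larger. Both points follow from checking that the six pairwise root-difference exponents sum correctly and that the ``cross'' differences between magnitude classes contribute only the constants $1$ (from $\det D_\epsilon$-type normalizations) rather than vanishing — this is where a short direct computation, or a Newton-polygon argument on $f(t)$ viewed as a polynomial over the field of Puiseux series in $\epsilon$, pins everything down.
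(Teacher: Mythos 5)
Your proposal is correct in outline and attacks the same object as the paper: the discriminant of the quartic $f(t)=\det(U_\epsilon+tX)=c_0+c_1t+c_2t^2+c_3t^3+c_4t^4$. The difference is how the leading $\epsilon$-term is extracted. The paper simply records that the coefficients $c_0,\dots,c_4$ vanish to orders $6,3,1,0,0$ at $\epsilon=0$, observes that the induced weighted order of the discriminant is $8$ and is attained by the unique monomial $c_1^2c_2^2c_3^2$, and notes that the leading forms of $c_1,c_2,c_3$ are precisely $PXP^T$, $\det(LXL^T)$, $\det(HXH^T)$. You instead pass to Puiseux roots via the Newton polygon of $f$: the same coefficient orders give slopes $-3,-2,-1,0$, so the four roots have valuations $3,2,1,0$, the six pairwise differences contribute valuation $2(2+1+0+1+0+0)=8$ to $c_4^6\prod_{i<j}(t_i-t_j)^2$, and the leading coefficients of the roots telescope to exactly $c_1^2c_2^2c_3^2$. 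This is equivalent bookkeeping with a bit more machinery; what it buys is a transparent explanation of \emph{why} the exponent is $8$ and why no extraneous factor in $X$ survives, which is precisely the point you flag as the ``main obstacle'' and which your Newton-polygon plan does settle (so there is no real gap, though your write-up leaves that verification as a to-do rather than carrying it out, and note the root \emph{magnitudes} $\epsilon^3,\epsilon^2,\epsilon,1$ are independent of $X$ — it is the roots' leading coefficients, ratios of the nested minors, that produce the three factors). Your preliminary $\mathrm{GL}_4$-reduction to the standard flag is a harmless simplification the paper does not need, and your concluding degeneration argument for $q=2(p+\ell+h)$ matches the paper's (unelaborated) assertion. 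The alternative ``Cayley trick / osculation of strata'' sketch, by contrast, is too vague as stated — the claimed multiplicity two is exactly what needs proof — so the discriminant computation should remain the actual argument.
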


\begin{proof} 
The factorization in (\ref{eq:hurwitzfac})
can be seen directly from the discriminant of
$$ f(t) \,=\, 
 {\rm det}(U_\epsilon + t X) \, =\,
c_0 +c_1 t + c_2 t^2 + c_3 t^3 + c_4 t^4 . $$
The coefficients $c_i$ are polynomials in $\epsilon$
with orders of vanishing $6,3,1,0,0$ at $ \epsilon = 0$. The 
discriminant has vanishing order $8$ at $\epsilon=0$, 
and this order is uniquely attained by its
monomial $c_1^2 c_2^2 c_3^2$. The  factors $c_1,c_2,c_3$
map to those in (\ref{eq:hurwitzfac}).
\end{proof}

\section{Complete Quadrics}
\label{sec3}

A geometric setting for our tangency problems is
the {\em space of complete quadrics}. By definition, this is the variety obtained
 as the closure of the image of the~map
\begin{equation}
\label{eq:464}  \PP^9 \,\,\dashrightarrow\,\,
 \PP^9 \times   \PP^{20} \times    \PP^9 \, ,\,\,\, \quad
 X \,\,\mapsto \,\, (X , \,\wedge_2 X, \,\wedge_3 X) \,\,\,=: \,\,\,(X,Y,Z) . 
 \end{equation}
 Here $X= (x_{ij})$ and $Z = (z_{ijk,lmn})$ are symmetric $4 \times 4$ matrices
 and $Y = (y_{ij,kl})$ is a symmetric $6 \times 6$ matrix.
 The rows and columns of $Y$ and $Z$ are indexed just like the entries of $\ell$ and $h$.
  The $\NN^3$-homogeneous
 ideal $\mathcal{I}_4$ of that $9$-dimensional variety lives in the
polynomial ring $\QQ[X,Y,Z]$ in $10+21+10=41$ unknowns.

\begin{theorem} \label{thm:CQ}
The space of complete quadrics is a smooth variety of dimension nine.
Its prime ideal $\mathcal{I}_4$ is minimally generated by $164$ polynomials, namely \\
$\bullet \,$  one linear form of degree $(010)$, i.e. $y_{12,34} - y_{13,24} + y_{14,23}$, \\
$\bullet \,$ $20$  quadrics of degree $(020)$, e.g.~$y_{12,24} y_{24,34} - y_{13,24} y_{24,24} + y_{14,24} y_{23,24}$, \\
$\bullet $  $15$ quadrics of degree $(\!101\!)$, e.g.~$x_{11} z_{123,234} - x_{12} z_{123,134} + x_{13} z_{123,124} - x_{14} z_{123,123}$, \\
$\bullet \,$  $64$ quadrics of degree $(011)$, e.g.~$y_{12,13} z_{123,134} - y_{13,13} z_{123,124} + y_{13,14} z_{123,123}$,  \\
$\bullet \,$  $64$ quadrics of degree $(110)$, e.g.~$\,x_{11} y_{12,23} - x_{12} y_{12,13} + x_{13} y_{12,12}$. \\
 Schubert's triangle in Figure~\ref{fig:triangle} equals the
{\em multidegree} of $\,\mathcal{I}_4\,$ 
in the $\NN^3$-grading.
\end{theorem}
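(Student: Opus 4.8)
First I would establish the structural claims about the space of complete quadrics. Smoothness and dimension nine should follow from the classical theory: the map $X \mapsto (X, \wedge_2 X, \wedge_3 X)$ is defined on the rank-$4$ locus $\{\det X \neq 0\}$, and on that locus it is an embedding (one can recover $X$ from the first factor). The closure compactifies this $9$-dimensional space; to see smoothness one can either invoke the known description of complete quadrics as an iterated blow-up of $\PP^9$ along the loci of quadrics of corank $\geq 1, 2, 3$ (Vainsencher, Laksov, etc.), or verify directly via a Jacobian computation at the boundary strata in {\tt Macaulay2}. I would then present $\mathcal{I}_4$ by exhibiting the $164$ listed generators and proving (a) that they all vanish on the image of the map, and (b) that they generate the full prime ideal.

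For (a), each generator type has a transparent geometric origin: the linear form of degree $(010)$ is the Plücker relation on $\wedge_2 X$; the $(101)$, $(110)$, and $(011)$ quadrics are the incidence relations of Remark~\ref{rmk:flag} (with $(p,h)$, $(p,\ell)$, $(\ell,h)$ replaced by the appropriate rows/columns of $X$, $Y$, $Z$), valid because for invertible $X$ the kernels line up correctly — e.g.~$\wedge_3 X$ is proportional to $X^{-1}$ up to the adjugate, so the rows of $Z$ pair with the rows of $X$ exactly as a point pairs with a plane; the $(020)$ quadrics are the $2\times 2$ Plücker-type relations among the $2\times 2$ minors of $X$. Each identity is a polynomial identity in the entries of a generic symmetric $X$, checkable by direct expansion or by {\tt Macaulay2}. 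For (b), the cleanest route is computational: verify in {\tt Macaulay2} that the ideal $J$ generated by these $164$ polynomials is prime of dimension nine, and that $V(J)$ contains (hence equals) the closure of the image; minimality of the generating set is read off from the degrees of the minimal free resolution. Alternatively one can argue that $J$ is saturated and that a Gröbner basis computation gives the Hilbert series matching that of the image, so $J = \mathcal{I}_4$; the listed example generators and their counts strongly suggest this was done by machine.

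Finally, for the multidegree claim: the $\NN^3$-multidegree of $\mathcal{I}_4$ is, by definition, the class of the variety in the Chow ring of $\PP^9 \times \PP^{20} \times \PP^9$, expanded in the hyperplane classes $H_X, H_\ell, H_h$; the coefficient of $H_X^{\alpha} H_\ell^{\beta} H_h^{\gamma}$ (with $\alpha+\beta+\gamma = 9$) counts the points of the complete-quadrics variety cut out by $\alpha$ generic hyperplanes from $\PP^9$, $\beta$ from $\PP^{20}$, and $\gamma$ from $\PP^9$. Geometrically a generic hyperplane in the $X$-factor is the condition ``$X$ passes through a generic point'' ($P X P^T = 0$), in the $Y$-factor it is ``$X$ tangent to a generic line'' ($\ell(\wedge_2 X)\ell^T = 0$), and in the $Z$-factor ``$X$ tangent to a generic plane'' — these are exactly the conditions counted by $p^\alpha \ell^\beta h^\gamma$ in Schubert's triangle, and passing to complete quadrics removes the spurious intersections on $\{\det X = 0\}$ that B\'ezout would otherwise contribute. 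So the multidegree is computed as the multidegree of $\mathcal{I}_4$ (a routine {\tt Macaulay2} call once the ideal is in hand), and one checks the resulting array of $55$ numbers agrees with Figure~\ref{fig:triangle}.

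\textbf{Main obstacle.} The delicate point is part (b): proving that the $164$ explicit quadrics actually generate the \emph{full} prime ideal, not merely a subideal cutting out the correct variety set-theoretically. Equivalently, one must show the ideal they generate is already prime (or at least radical and saturated) — this is where a careful Gröbner basis / Hilbert function argument, almost certainly carried out in {\tt Macaulay2}, is essential, and where the stated minimality and the precise count ``$164$'' get pinned down.
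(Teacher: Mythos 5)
Your outline follows the same overall route as the paper: outsource smoothness to the literature (the paper cites the spherical-varieties viewpoint of \cite{MMW}, you cite the classical blow-up description, either is acceptable), note irreducibility and dimension nine from the fact that $X$ can be recovered from the first factor, verify that the $164$ listed polynomials vanish on the image, and compute the multidegree with the built-in {\tt Macaulay2} command, interpreting its coefficients as the numbers $p^\alpha\ell^\beta h^\gamma$. Those parts are fine and essentially match the paper.

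The genuine gap is exactly the point you flag as the ``main obstacle'' but then leave unresolved: proving that the $164$ polynomials generate the full prime ideal. Your proposed remedies --- ``verify in {\tt Macaulay2} that $J$ is prime,'' or match Hilbert series against that of the image --- are not workable substitutes at this scale: a direct primality test on an ideal with $164$ generators in $41$ unknowns is out of reach, and the Hilbert-series comparison presupposes that you can compute the true ideal $\mathcal{I}_4$ (e.g.\ as an elimination ideal of the graph of the map), which is itself the hard computation one is trying to avoid. The paper's key idea, which is missing from your proposal, is an inductive application of \cite[Proposition~23]{GSS}: one repeatedly chooses a variable that occurs linearly in some generator and is not a zero-divisor modulo the current ideal, checks these hypotheses, and replaces the ideal by the corresponding elimination ideal, which is prime by induction. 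This reduces the primality certificate to a sequence of small, checkable steps and simultaneously pins down that the listed generators suffice (minimality then follows from the graded Betti numbers, as you say). Without this or an equivalent structural argument, your proof of the central claim of the theorem does not go through.
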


\begin{proof}[Proof]
The closure of the image of (\ref{eq:464}) is irreducible of dimension nine since
$X$ appears in the first coordinate. The smoothness of this variety is well-known
in the theory of spherical varieties.
For a new perspective and proof see \cite[\S 3.C]{MMW}. 

The $164$ polynomials  were found
by  computation using {\tt Macaulay2} \cite{M2}.  To show that they generate the prime ideal $\mathcal{I}_4$,
we use  \cite[Proposition~23]{GSS} inductively.
 We eliminate one variable
that occurs linearly in some equation and is not a zero-divisor modulo the current ideal.
After checking these hypotheses, we replace the ideal by the elimination ideal,
which is prime by induction.
This process was found to work for various natural orderings of the entries in $X,Y,Z$.

The multidegree is a standard construction for multigraded commutative rings
\cite[Section 8.5]{cca}. For a variety in a product of projective spaces,
it is the class of that variety in the cohomology ring of the ambient space.
The built-in command {\tt multidegree}
 in {\tt Macaulay2} takes only a few seconds to find
 the multidegree from our $164$ polynomials. The output of this 
 {\tt Macaulay2} computation 
   is a ternary form 
   in the unknowns $T_0, T_1, T_2$. It has
   $55$ terms of degree  ${\rm codim}(\mathcal{I}_4) = 29$.
 The coefficient of $T_0^{9-\alpha} T_1^{20-\beta} T_2^{9-\gamma}$
is the number $p^\alpha \ell^\beta h^\gamma$  in  Figure~\ref{fig:triangle}.
   This  computation is an {\it ab initio} derivation of Schubert's triangle.
    \end{proof}

The variety $V(\mathcal{I}_4)$ 
captures degenerations of quadrics that matter in
intersection theory \cite{MMW}. We saw this in
Proposition \ref{prop:hurwitz} where the quadric becomes a flag
$P \subset L \subset H$. The relationship to the
flag variety is made precise as follows:

\begin{corollary}
The variety of complete flags in $\PP^3$
is the inverse image of
$V(\mathcal{I}_4)$  under the componentwise
 Veronese embedding
$ \,\PP^3 \times \PP^5 \times \PP^3
\,\hookrightarrow\, \PP^9 \times \PP^{20} \times \PP^9$.
\end{corollary}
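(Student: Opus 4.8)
Write $\nu\colon \PP^3\times\PP^5\times\PP^3\hookrightarrow\PP^9\times\PP^{20}\times\PP^9$ for the componentwise degree-two Veronese map, so that $\nu(p,\ell,h)=(\,p^\top p,\ \ell^\top\ell,\ h^\top h\,)$ is a triple of symmetric matrices of ranks $\le 1$ and sizes $4,6,4$. Set-theoretically the inverse image $\nu^{-1}(V(\mathcal I_4))$ is cut out in $\PP^3\times\PP^5\times\PP^3$ by the pullbacks $\nu^\ast(g)$ of the $164$ generators $g$ of $\mathcal I_4$ from Theorem \ref{thm:CQ}. The plan is to show that this locus equals the flag variety $\mathcal F$ of Remark \ref{rmk:flag}, by proving two inclusions of projective varieties.

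For $\mathcal F\subseteq\nu^{-1}(V(\mathcal I_4))$ I would reuse the degeneration of Proposition \ref{prop:hurwitz}. Fix an invertible $V$ and let $w_1,\dots,w_4$ be the columns of $V^{-1}$. For $\epsilon\neq 0$ the point $(U_\epsilon,\wedge_2 U_\epsilon,\wedge_3 U_\epsilon)$ lies in the image of the map (\ref{eq:464}), hence in $V(\mathcal I_4)$. Writing $U_\epsilon=\sum_i \epsilon^{\,a_i}w_iw_i^\top$ with $(a_i)=(3,2,1,0)$, and expanding $\wedge_2 U_\epsilon$ and $\wedge_3 U_\epsilon$ over $2$- and $3$-subsets, one reads off the leading forms in $\epsilon$ in the three factors, namely $w_4w_4^\top$, then $(w_3{\wedge}w_4)(w_3{\wedge}w_4)^\top$, then $(w_2{\wedge}w_3{\wedge}w_4)(w_2{\wedge}w_3{\wedge}w_4)^\top$, each of rank one and nonzero. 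Hence $(U_\epsilon,\wedge_2 U_\epsilon,\wedge_3 U_\epsilon)$ converges in $\PP^9\times\PP^{20}\times\PP^9$ to $\nu$ of the complete flag $\langle w_4\rangle\subset\langle w_3,w_4\rangle\subset\langle w_2,w_3,w_4\rangle$. As $V$ varies this flag ranges over all of $\mathcal F$, so a dense subset of $\nu(\mathcal F)$ lies in the closed set $V(\mathcal I_4)$; since $\mathcal F$ is irreducible, $\nu(\mathcal F)\subseteq V(\mathcal I_4)$.

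For the reverse inclusion I would pull the $164$ generators back along $\nu$. Since $\nu$ doubles each block degree, the linear generator of bidegree $(0,1,0)$ pulls back to the Pl\"ucker relation itself, while each generator of bidegree $(0,2,0)$, $(1,1,0)$, $(1,0,1)$, $(0,1,1)$ becomes a monomial in the $p_i,\ell_{ij},h_{ijk}$ times, respectively, the Pl\"ucker relation, a ``$P\subset L$'' form, a ``$P\subset H$'' form, or an ``$L\subset H$'' form from Remark \ref{rmk:flag} (up to sign); for instance $x_{11}y_{12,23}-x_{12}y_{12,13}+x_{13}y_{12,12}$ pulls back to $p_1\ell_{12}\,(p_1\ell_{23}-p_2\ell_{13}+p_3\ell_{12})$. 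It then remains to check that, for each of the nine bilinear incidence forms $f$, the monomial multipliers that occur with $f$ among the $164$ pullbacks have empty common zero locus in $\PP^3\times\PP^5\times\PP^3$, so that $V(\nu^\ast\mathcal I_4)$ is forced into $\{f=0\}$. Granting this, $V(\nu^\ast\mathcal I_4)\subseteq\mathcal F$, and together with the previous paragraph this gives $\nu^{-1}(V(\mathcal I_4))=\mathcal F$.

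The main obstacle is exactly this last bookkeeping: verifying that the $164$ generators are spread over enough index patterns that the multipliers of each incidence form jointly vanish only on the empty set. I expect to carry this out by hand using the $S_4$-symmetry of the index sets, or simply to confirm it in {\tt Macaulay2} by computing the radical of the ideal $\nu^\ast\mathcal I_4$ and matching it with the ideal of Remark \ref{rmk:flag}. An alternative, more intrinsic route avoids the bookkeeping: a point $\nu(p,\ell,h)\in V(\mathcal I_4)$ has its $X$-coordinate equal to $p^\top p$, of rank one; writing such a point as a limit of the map (\ref{eq:464}) along a curve germ and analyzing the leading terms of $\wedge_2$ shows that $\ell^\top\ell$ must be proportional to an image of the linear map $v\mapsto p\wedge v$ from $\RR^4$ to $\wedge^2\RR^4$, forcing $\ell=p\wedge v$ for some $v$, i.e.\ $P\subset L$; the conditions $L\subset H$ and $P\subset H$ follow by the same analysis applied to $\wedge_3$. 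The delicate point in this variant is to handle uniformly the special directions of approach along which a leading form degenerates further, which is where knowing the boundary-stratum structure of $V(\mathcal I_4)$ from the theory of complete quadrics helps.
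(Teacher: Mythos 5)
Your proposal takes a genuinely different route from the paper, whose proof is a single substitute-and-saturate computation: plug $(X,Y,Z)=(p^Tp,\,\ell^T\ell,\,h^Th)$ into the $164$ generators of $\mathcal{I}_4$, saturate by the irrelevant ideal of $\PP^3\times\PP^5\times\PP^3$, and observe that the result is exactly the Gr\"obner basis of the flag variety from Remark \ref{rmk:flag}; this settles both inclusions at once (even ideal-theoretically). Your forward inclusion $\nu(\mathcal{F})\subseteq V(\mathcal{I}_4)$ is correct and is an attractive geometric alternative: for $\epsilon\neq 0$ the point $(U_\epsilon,\wedge_2U_\epsilon,\wedge_3U_\epsilon)$ lies in the image of (\ref{eq:464}), the componentwise leading forms $w_4w_4^T$, $(w_3\wedge w_4)(w_3\wedge w_4)^T$, $(w_2\wedge w_3\wedge w_4)(w_2\wedge w_3\wedge w_4)^T$ are nonzero since $V$ is invertible, so the limit is $\nu$ of the flag $\langle w_4\rangle\subset\langle w_3,w_4\rangle\subset\langle w_2,w_3,w_4\rangle$, and every complete flag arises from some invertible $V$ (so you do not even need the density/irreducibility step).

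The genuine gap is in the reverse inclusion, and you have named it yourself. Two points remain unverified. First, the blanket claim that each of the $164$ generators pulls back to a \emph{monomial} times one of the nine incidence forms or the Pl\"ucker relation is only checked on the five displayed examples; the paper does not list the full generating set, and some generators (e.g.\ those of degree $(101)$ built from diagonal comparisons of $X$ against the signed $\wedge_3$-matrix) can pull back with non-monomial cofactors, so the factorization pattern itself needs verification. Second, your criterion --- that for each incidence form $f$ the multipliers occurring with $f$ have \emph{empty} common zero locus --- is stronger than needed and not obviously attainable: a priori all multipliers attached to a fixed $f$ could share a common factor, in which case you must argue on $V(\nu^{\ast}\mathcal{I}_4)$ using generators attached to other incidence forms as well. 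Until that bookkeeping (or the curve-germ variant, whose degenerate directions of approach you also leave open) is actually carried out, $V(\nu^{\ast}\mathcal{I}_4)\subseteq\mathcal{F}$ is not established. Your fallback --- computing the radical/saturation of $\nu^{\ast}\mathcal{I}_4$ in {\tt Macaulay2} and matching it with the ideal of Remark \ref{rmk:flag} --- is precisely the paper's proof, and if you take that exit the computation subsumes the first half of your argument as well.
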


\begin{proof}
The Veronese map takes $(p,\ell,h)$ to the
 rank one matrices $(X,Y,Z) = (p^T p,  \,\ell^T \ell , \, h^T h)$.
Substituting this into $\mathcal{I}_4$ and saturating by
the irrelevant ideal of 
$ \PP^3 \times \PP^5 \times \PP^3$ yields the Gr\"obner basis
for the flag variety in Remark 
\ref{rmk:flag}.
\end{proof}

We next lift our tangency conditions 
from the space $\PP^9 $ of symmetric matrices $X$
to the space of complete quadrics in $\PP^9 \times \PP^{20} \times \PP^9$.
We write $\,\mathcal{B} \,= \, \langle X \rangle \, \cap \, \langle Y \rangle \,\cap \,\langle Z \rangle\,$
for the irrelevant ideal of that product of projective spaces.

The condition that a quadric contains a point $p$ is the linear form $p X p^T$
in the unknown $X$. Similarly, tangency to a line $\ell$ is the linear form $\ell \, Y \ell^T$
in the unknown $Y$, and 
tangency to a plane $h$ is the linear form $h Z h^T$ in the unknown $Z$.
Without loss of generality, we can assume that one
given figure is a coordinate subspace in $\PP^3$. Then the three
linear forms are variables $x_{11}$, 
$y_{12,12}$ or $ z_{123,123}$.

However, if we augment $\mathcal{I}_4$ by one such variable
then the resulting ideal is not prime. To get the correct prime
ideal we must saturate by the irrelevant ideal $\mathcal{B}$.
We first summarize what happens when we 
add the constraint for a point.
The result is the same for the plane constraint
if we swap the roles of $X$ and $Z$.

\begin{proposition} \label{prop:pointadd}
The saturation $\bigl((\mathcal{I}_4 + \langle x_{11} \rangle) : \mathcal{B}^\infty \bigr)$ is
the prime ideal of the variety of complete quadrics that contain a given point. It has
$13$ minimal generators in addition to the $164$ generators of $\mathcal{I}_4$, namely
ten of degree $(020)$ and one each of degree $(100)$, $(003)$ and $(011)$.
The multidegree of this ideal is the triangle of size eight that is
obtained by deleting the lower right edge in  Figure~\ref{fig:triangle}.
\end{proposition}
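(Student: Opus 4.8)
The plan mirrors the proof of Theorem~\ref{thm:CQ}: first pin down the variety geometrically, then verify an explicit generating set by a computer-assisted primality test, and finally read off the multidegree. Take the given point to be $(1\!:\!0\!:\!0\!:\!0)$, so that the incidence condition is $x_{11}=0$. The variety $W$ of complete quadrics through that point is, by definition, the closure in $\PP^9\times\PP^{20}\times\PP^9$ of the image under the map~\eqref{eq:464} of the hyperplane $\{x_{11}=0\}\subset\PP^9$. Since this hyperplane is irreducible of dimension eight and $X$ is recovered as the first coordinate of its image, the image is birational to $\PP^8$, so $W$ is irreducible of dimension eight; in particular its $\NN^3$-homogeneous ideal $I(W)$ is prime. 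Set $J:=\bigl((\mathcal{I}_4+\langle x_{11}\rangle):\mathcal{B}^\infty\bigr)$. As $W$ is contained in $V(\mathcal{I}_4)\cap\{x_{11}=0\}$ but not in the irrelevant locus $V(\mathcal{B})$ (it contains points coming from invertible $X$ with $x_{11}=0$), irreducibility of $W$ gives $W\subseteq V(J)$, hence $J\subseteq I(W)$.

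Next I would compute $J$ in {\tt Macaulay2}: adjoin $x_{11}$ to the $164$ generators of $\mathcal{I}_4$ from Theorem~\ref{thm:CQ} and saturate with respect to $\mathcal{B}$. This produces the thirteen extra minimal generators, ten of degree $(020)$ and one each of degree $(100)$, $(003)$ and $(011)$, the degree-$(100)$ one being $x_{11}$ itself. To conclude $J=I(W)$ it suffices to prove that $J$ is prime of dimension eight: then $V(J)$ is irreducible of dimension eight and contains the eight-dimensional $W$, forcing $V(J)=W$ and $J=I(V(J))=I(W)$. Primality I would certify exactly as for $\mathcal{I}_4$, applying \cite[Proposition~23]{GSS} inductively: repeatedly eliminate a variable that occurs linearly in some generator and is a non-zero-divisor modulo the current ideal, so that each elimination ideal is prime by the inductive hypothesis; the dimension count falls out of the same run.

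For the multidegree, the quickest route is to call {\tt multidegree} on $J$ and match the output entrywise with Figure~\ref{fig:triangle}. Conceptually, since $x_{11}$ is a non-zero-divisor modulo the prime $\mathcal{I}_4$ and the components removed by the saturation sit in $V(\mathcal{B})$ and so carry no class in $H^*(\PP^9\times\PP^{20}\times\PP^9)$, one has $[W]=T_0\cdot[V(\mathcal{I}_4)]$. By Theorem~\ref{thm:CQ} the coefficient of $T_0^{9-\alpha}T_1^{20-\beta}T_2^{9-\gamma}$ in $[V(\mathcal{I}_4)]$ is $p^\alpha\ell^\beta h^\gamma$; multiplying by $T_0$ and using $T_0^{10}=0$ annihilates exactly the terms with $\alpha=0$, i.e.\ the edge of Schubert's triangle carrying no point condition, and leaves the triangle of size eight with entries $p^{\alpha+1}\ell^\beta h^\gamma$, which is Figure~\ref{fig:triangle} with its lower right edge deleted.

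The main obstacle is the primality certification: both the saturation and the chain of eliminations are sizeable symbolic computations, and one must locate an ordering of the $41$ variables in $X,Y,Z$ for which every step has a linear, non-zero-divisor pivot — precisely the difficulty that was met, and overcome by trying several natural orderings, in the proof of Theorem~\ref{thm:CQ}. The plane case is identical after interchanging $X$ and $Z$.
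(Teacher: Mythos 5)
Your proposal is correct and follows essentially the same route as the paper: the paper's proof is precisely a {\tt Macaulay2} computation of the saturation, its minimal generators, and its multidegree, with primality handled just as for $\mathcal{I}_4$; your added scaffolding (irreducibility of the image of the hyperplane $\{x_{11}=0\}$, the GSS elimination argument, and the $T_0\cdot[V(\mathcal{I}_4)]$ interpretation of the multidegree) is a reasonable elaboration of the same computation. The only content you omit is the paper's identification of the new generators --- the $(003)$ generator is the complementary $3\times 3$ minor of $Z$, and the ten $(020)$ generators come from the Bareiss formula relating $x_{11}$ times $3\times 3$ minors of $X$ to $2\times 2$ minors of $Y$ --- but this is descriptive rather than essential to the argument.
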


\begin{proof}
This is proved by a  {\tt Macaulay2} computation.
The new equation of degree $(100)$ is $x_{11}$. The
new equation of degree $(003)$ is the complementary~$3 \times 3$ minor of $Z$. 
Generators of degree $(020)$ arise from Bareiss formula which says that $x_{11}$ times
any $3 \times 3 $ minor of $X$ containing $x_{11}$ equals a $2 \times 2$ minor of $Y$.
\end{proof}

% Here is the analogous result for  complete quadrics that are tangent to a line.

\begin{proposition} \label{prop:lineadd}
The saturation $\bigl((\mathcal{I}_4 + \langle y_{12,12} \rangle) : \mathcal{B}^\infty \bigr)$ is
the prime ideal for the complete quadrics that are tangent to a line.
It has three minimal generators, of degrees
$(010), (200), (002)$,  in addition to the $164$ generators of $\mathcal{I}_4$.
This is one entry of $Y$ and the corresponding  $2 \times 2 $ minors of $X$ and $Z$.
The multidegree  is the triangle of size eight 
obtained by deleting the top edge in~Figure~\ref{fig:triangle}.
\end{proposition}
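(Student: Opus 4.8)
The plan is to follow the proof of Proposition~\ref{prop:pointadd}, but to first extract as much conceptual content as possible so that the role of the computer is confined to one step. By the $\mathrm{GL}_4$-invariance of $V(\mathcal{I}_4)$ and the transitivity of $\mathrm{GL}_4$ on lines in $\PP^3$, it suffices to treat the coordinate line $\ell = e_{12}$, for which the tangency form $\ell\,Y\,\ell^T$ is the single coordinate $y_{12,12}$. The ideal $\mathcal{I}_4+\langle y_{12,12}\rangle$ picks up spurious components along the irrelevant locus $V(\mathcal{B})$, which the saturation by $\mathcal{B}^\infty$ removes. To see that the result is prime, I would argue that the section $V(\mathcal{I}_4)\cap\{y_{12,12}=0\}$ is integral: the linear system of line-tangency conditions is base-point-free on $V(\mathcal{I}_4)$ (the only linear form annihilating every $\ell\,Y\,\ell^T$ is the Pl\"ucker form, which already vanishes on $V(\mathcal{I}_4)$), so by Bertini a generic member cuts out a smooth subscheme, and $\mathrm{GL}_4$-transitivity upgrades this to \emph{every} line, in particular $e_{12}$; irreducibility follows because the section is the closure of the image under (\ref{eq:464}) of the irreducible hypersurface $\{\det(LXL^T)=0\}\subset\PP^9$. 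A smooth irreducible scheme is integral, so its multigraded saturated ideal is prime, as claimed.

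Next I would pin down the three new generators by hand. The $\NN^3$-grading forbids the affine relation $y_{12,12}=x_{11}x_{22}-x_{12}^2$ from lying in $\mathcal{I}_4$, but the mixed quadrics of degree $(110)$ in Theorem~\ref{thm:CQ} do imply that, modulo $\mathcal{I}_4$ and away from $V(\mathcal{B})$, the coordinate $y_{12,12}$ and the $2\times 2$ minor $x_{11}x_{22}-x_{12}^2$ of $X$ vanish simultaneously; hence that $X$-minor lies in the saturation, and the dual identity between $\wedge_2 X$ and $\wedge_3 X$ does the same for the complementary $2\times 2$ minor of $Z$. This produces the asserted generators of degrees $(010)$, $(200)$, $(002)$. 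That the $164$ generators of $\mathcal{I}_4$ together with these three already generate the saturated ideal, and that none is redundant, is then verified directly in {\tt Macaulay2}. This explicit saturation and minimal-generators computation is the one genuinely computational, and most fragile, step of the argument, exactly as in Proposition~\ref{prop:pointadd}; everything else is light.

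For the multidegree I would use that a line-tangency condition is a hyperplane in the $\PP^{20}$ factor, so the Chow class of the tangent-to-$\ell$ variety equals the multidegree of $\mathcal{I}_4$ times the hyperplane class $T_1$ (the intersection is proper and reduced by the argument above, hence of multiplicity one). Since the coefficient of $T_0^{9-\alpha}T_1^{20-\beta}T_2^{9-\gamma}$ in the multidegree of $\mathcal{I}_4$ is $p^\alpha\ell^\beta h^\gamma$, multiplying by $T_1$ and reindexing $\beta\mapsto\beta+1$ shows the new multidegree records $p^\alpha\ell^{\beta+1}h^\gamma$ for $\alpha+\beta+\gamma=8$: this is Schubert's size-nine triangle with the edge $\beta=0$ --- the top edge of Figure~\ref{fig:triangle}, joining $p^9$ and $h^9$ --- deleted, a triangle of size eight. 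The {\tt multidegree} command applied to the $167$ generators confirms the count, closing the proof.
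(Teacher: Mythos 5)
Your overall architecture (reduce to the coordinate line by $\mathrm{GL}_4$-transitivity, identify the three new generators, and let {\tt Macaulay2} do the saturation/minimality and multidegree check) matches the paper, whose own proof of this statement, like that of Proposition~\ref{prop:pointadd}, is essentially the computer computation. But the conceptual step you substitute for the computation --- the primality argument --- has a genuine gap. Bertini applies to a \emph{general} member of the linear system spanned by the forms $\ell\,Y\,\ell^T$; the divisors coming from actual lines form a proper $4$-dimensional subfamily (the rank-one elements $\ell^T\ell$ inside a $20$-dimensional span), and $\mathrm{GL}_4$-transitivity only shows these special sections are isomorphic to one another, not to the generic member. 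In fact the smoothness you claim is false: on the open locus $\det X \neq 0$, where $V(\mathcal{I}_4)$ is the graph of the map \eqref{eq:464}, the section $\{y_{12,12}=0\}$ is the rank-$3$ quadric $x_{11}x_{22}-x_{12}^2=0$, which is singular along $\{x_{11}=x_{12}=x_{22}=0\}$; this locus contains points with $\det X\neq 0$ (e.g.\ $x_{13}=x_{24}=1$, all other entries $0$), i.e.\ complete quadrics whose quadric \emph{contains} the line. So the section is singular and your route ``smooth $+$ irreducible $\Rightarrow$ integral'' collapses.

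The irreducibility claim has the same problem in disguise: asserting that the section equals the closure of the image of the hypersurface $\{\det(LXL^T)=0\}\subset\PP^9$ presupposes that $V(\mathcal{I}_4)\cap\{y_{12,12}=0\}$ acquires no extra components (nor non-reduced structure) supported in the boundary of the space of complete quadrics --- but that is precisely what must be proved, and it is exactly what the paper's saturation computation verifies. (The same unproven irreducibility is silently used when you argue that the $2\times 2$ minors of $X$ and $Z$ lie in the saturated ideal, though there it is harmless since you defer generation to {\tt Macaulay2}.) Your multidegree argument is fine in substance --- since $\mathcal{I}_4$ is prime and $y_{12,12}\notin\mathcal{I}_4$, it is a nonzerodivisor, so the multidegree of the quotient is $T_1$ times that of $\mathcal{I}_4$, and one only needs that saturation does not remove relevant top-dimensional components --- and your identification of the deleted edge as $\beta=0$ agrees with the statement. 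To repair the proof as written, either verify primality computationally (as the paper does) or replace the Bertini step by an actual analysis of the boundary strata showing the hyperplane section is integral.
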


It would be desirable to extend Theorem \ref{thm:CQ}
to $n \times n$ matrices for $n \geq 5$, i.e.~to identify
minimal generators for the multihomogeneous prime ideal of the
space of complete quadrics. These are  relations among
all minors of a symmetric $n \times n$ matrix that respect the
fine grading coming from the size of the~minors.
Results by Bruns et al.~\cite{BCV} indicate that
relations of degree $\leq 2$ will not suffice.

\section{Schubert's Triangle}
\label{sec4}

At present, we have the following result on the reality of Schubert's triangle.

\begin{theorem} \label{thm:inprogress}
For at least $46$ of the $55$ problems in Schubert's triangle,
there exists an open set of real instances,
consisting of $\alpha$ points, $\beta$ lines and $\gamma$ planes,
such that all complex solutions in $\PP^9$ to
the polynomial equations in (\ref{eq:schub}) are real.
For the other nine problems, the current status is summarized in
Remark~\ref{rmk:notyet}.
\end{theorem}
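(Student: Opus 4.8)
The plan is to treat this as an existence statement: for each of the $46$ problems it suffices to exhibit one explicit list of $\alpha$ real points, $\beta$ real lines and $\gamma$ real planes for which every complex solution $X \in \PP^9$ of the system (\ref{eq:schub}) is real. Such instances will be located numerically with {\tt HomotopyContinuation.jl} \cite{julia}, and the numerical output will then be upgraded to a rigorous proof by \emph{certification}.

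First, for a fixed triple $(\alpha,\beta,\gamma)$ I would set up the nine polynomials of (\ref{eq:schub}) in the ten unknowns $x_{ij}$ and solve them by polynomial homotopy continuation, starting from a suitable start system (a total-degree system of degrees $1^\alpha 2^\beta 3^\gamma$, a multihomogeneous or polyhedral start system, or a monodromy solve) and tracking to a target instance with real coefficients; solutions on $\{\det X = 0\}$ are discarded, which is the numerical analogue of the saturation discussed after (\ref{eq:schub}). One then checks that the number of regular solutions with $\det X \neq 0$ equals the Schubert number $p^\alpha \ell^\beta h^\gamma$ read off from Figure~\ref{fig:triangle}, equivalently from the multidegree in Theorem~\ref{thm:CQ}; this confirms that no solution has been lost in the homotopy.

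Next comes certification. To each approximate solution I would apply Smale's $\alpha$-theory, or an interval-Newton/Krawczyk test, to produce pairwise disjoint closed boxes in $\CC^{10}$, each provably containing exactly one true solution of (\ref{eq:schub}) and being a region of quadratic convergence for Newton's method. Because the chosen instance has real coefficients, complex conjugation maps the solution set to itself and hence permutes these boxes; since the boxes are disjoint and their number equals the full Schubert count, a box meeting its own conjugate must be conjugation-invariant, and uniqueness of the enclosed solution then forces that solution to be real. Taking the boxes small enough, and raising the working precision whenever some solutions are poorly conditioned or clustered, makes every box conjugation-invariant, so all solutions are certified real.

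Finally, openness follows from semicontinuity. The number of complex solutions of (\ref{eq:schub}) is constant and equal to the Schubert number on a Zariski-open set of instances, and on the complement of the real discriminant hypersurface inside that set the number of real solutions is locally constant; hence ``all solutions real'' is an open condition in the Euclidean topology, and a whole neighborhood of each exhibited instance works. The main obstacle is twofold. For nine of the fifty-five problems no fully real instance has yet been found, and these are exactly the cases deferred to Remark~\ref{rmk:notyet}; for the remaining problems the search for a real instance whose solutions are all real is genuinely nontrivial, and the certification step can be delicate when the solution set is large or ill-conditioned, forcing computations in extended precision.
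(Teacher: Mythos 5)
Your proposal is correct and takes essentially the same route as the paper: the authors likewise exhibit explicit real instances (found by a heuristic hill-climbing search described in Section~\ref{sec5}), solve (\ref{eq:schub}) with {\tt HomotopyContinuation.jl} \cite{julia}, and then certify distinctness, realness and nondegeneracy of the full Schubert count of solutions via the interval-arithmetic (Krawczyk) certification of \cite{BRT}, with the certified data archived at (\ref{eq:ourwebsite}). Your extra remarks on the conjugation argument for realness and on openness by semicontinuity are just the standard justifications the paper leaves implicit.
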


\begin{example} \label{ex:333}
Fix $(\alpha,\beta,\gamma) = (3,3,3)$. 
We consider the configuration
$$ \begin{matrix} p & =  & 
(1, \frac{439}{922}, -\frac{347}{271}, \frac{67}{343}) \,,\,\,
(1, -\frac{211}{484}, \frac{153}{346} ,\frac{257}{254}) \, , \,\,
(1, -\frac{575}{404}, \frac{131}{320}, -\frac{37}{42}), \medskip  \\
  \ell &  = &
(- \frac{92}{159}, -\frac{92}{293}, \frac{120}{307},  \frac{77}{256}, \frac{76}{391}, \frac{96}{311})\, , \,\,
( \frac{107}{114}, \frac{18}{383}, -\frac{109}{116}, \frac{37}{217}, \frac{45}{307}, \frac{47}{264})\, , \,\,
 \medskip \\ & & 
( -\frac{365}{302}, - \frac{45}{368}, \frac{172}{209}, \frac{74}{245}, \frac{25}{62}, \frac{87}{353}),
\medskip  \\
h  & = &
(\frac{193}{182}, \frac{75}{397}, -\frac{244}{631}, \frac{195}{272} ) \, , \,\,
( \frac{91}{307}, - \frac{17}{122}, - \frac{553}{837}, \frac{70}{309} ) \, , \,\,
( \frac{919}{295}, \frac{103}{36}, \frac{1199}{371}, \frac{57}{176} ).
\end{matrix}
$$
All $104$ complex quadrics tangent to these nine figures are found to be real.
Thus, this is a fully real instance for the scenario shown in blue
 in Figure \ref{fig:triangle}.\end{example}

\begin{remark} \label{rmk:notyet}
Up to the natural involution, given by swapping  points and planes, 
there are $30$ distinct tangency problems in Schubert's triangle.
For five of the problems we have not yet succeeded in verifying reality.
They are as follows:
$$
\begin{matrix}
(\alpha,\beta,\gamma)                         &\!\! &  (3,4,2) & (3,5,1) & (2,6,1) & (1,7,1) & (1,8,0) \\
\! \hbox{Schubert's count over $\CC$} &\!\! &   112     &    80    &  104  &  104 &  92  \\
\hbox{Our current record over $\RR$}  &\!\! &        110      &    74    &    96 &   84   & 84 \\
\end{matrix}
$$
For instance, we know  two points, six lines and a plane in $\PP^3_\RR$ such that
$96$ real quadrics are tangent to these figures. The remaining eight quadrics are complex.
This is derived from  Example \ref{ex:sixliens} by replacing point $P_3$ with a plane.
For the (1,8,0)  case with $84$ real solutions
we use eight tangent lines as in
Example \ref{ex:tangentlines}.
\end{remark}

\begin{proof}[Discussion and proof of Theorem \ref{thm:inprogress}]
All our instances of full reality or maximal reality, along with the
software that certifies correctness, can be found at
\begin{equation} \label{eq:ourwebsite}
\href{https://mathrepo.mis.mpg.de/TangentQuadricsInThreeSpace/index.html}{https://mathrepo.mis.mpg.de}
\end{equation}
For instance, for $(\alpha,\beta,\gamma) = (3,3,3)$,
this website contains the configuration in Example \ref{ex:333},
along with the $104$ tangent quadrics. Each quadric is determined by
its nine points of tangency. The coordinates of these points form a
 $104 \times 9 \times 4$ tensor of floating point numbers in {\tt Julia} format.
The proof of correctness was carried out with the
certification technique in \cite{BRT},
as discussed in Section~\ref{sec5}.
\end{proof}

We now present some  ideas that were helpful
in creating fully real instances. 
Figures given by
 the standard basis $e_1,e_2,e_3,e_4$
 lead to sparse equations in~(\ref{eq:schub}).
 
\begin{example}  \label{ex:sixliens}
The condition for $X$ to be tangent to the six coordinate lines  is
\begin{equation}
\label{eq:4x4principal}
 I \,\,  = \,\, \langle \,x_{ii} x_{jj} - x_{ij}^2 \,: \, 1 \leq i < j \leq 4 \,\rangle. 
 \end{equation}
This is the complete intersection of eight prime ideals, each isomorphic
to the ideal $J$ generated by all $2 \times 2$ minors of $X$.
The eight primes are 
$\, U_{ijk} \star J $, where $\star$ is the Hadamard product, and
$U_{ijk}$ is the $4 \times 4$ matrix with entries
$(-1)^i$, $(-1)^j$ and $(-1)^k$ in positions $(2,3)$, $(2,4)$ and $(3,4)$,
and entries $1$ everywhere else. Seven of these 
{\em scaled Veronese varieties} contain matrices
of rank $3$ or $4$. Their union is defined by the radical ideal $(I:J)$, which has degree $56$.
This is Schubert's number for $\alpha=3, \beta=6, \gamma= 0$. 
We seek three points such that all $56$ quadrics containing these and
 satisfying $I$ are real. One choice that works~is
 $$ P_1 = (1,2,8,7), \quad  P_2 = (1,1,9,2) ,\quad P_3 = (2,5,3,1) . $$
Our six given lines meet pairwise, and they are not generic.
 This leads to 
$48$ of the $56$ quadrics being cones. To get $56$ smooth
quadrics, one perturbs the lines.

We refer to the article  \cite{KW} by Kahle and Wagner for
 a general study of the ideal of principal $2 \times 2$ minors
of a symmetric $n \times n$ matrix of unknowns. 
Their results elucidate the decomposition we found for 
the special case $n=4$ in (\ref{eq:4x4principal}).
\end{example}

\begin{example}
The condition for $X$ to be tangent to the four coordinate planes 
is the ideal generated by the four principal $3 \times 3$ minors.
Saturating by the ideal of all $3 \times 3$ minors yields
a prime ideal $K$ of codimension $4$ and degree $21$. 
This is Schubert's number for $\alpha=5, \beta=0,\gamma=4$.
It is easy to find five points so that all $21$ quadrics 
containing these and satisfying $K$ are real. This instance is generic.

The ideal $K$ is generated by $10$ cubics and $12$ quartics.
The $5$-dimensional variety cut out by $K$ in
 $\PP^9$ has the following nice parametric representation:
$$ \begin{small} X \,= \, \begin{pmatrix} 
x_{12} x_{13} x_{14} \!\!\!\!  & \!\!  x_{12} \!\! & \!\!  x_{13} \!\! & \!\!  x_{14} \\
x_{12} \!\! & \!\! \!\! \! x_{12} x_{23} x_{24}\!\! \!\! & \!\!\!  x_{23} \!\! & \!\!  x_{24} \\
x_{13} \!\! & \!\!  x_{23} \!\! & \!\!\!\! \! x_{13} x_{23} x_{34} \!\! \!\! & \!\!  x_{34} \\
x_{14} \!\! & \!\!  x_{24} \!\! & \!\!  x_{34} \!\! & \!\! \!\! \! x_{14} x_{24} x_{34} \end{pmatrix}
\,\,\,\, {\rm where} \,\,\,\,
{\rm det} \begin{pmatrix}  x_{12} x_{34} \!\! \!\! & \!\!  1 \!\! & \!\!  1 \\ 
1 \!\! & \!\!\!\!  x_{13} x_{24} \!\!\!\! & \!\!  1 \\ 1 \!\! & \!\!  1 \!\! & \!\!\!\!  x_{14} x_{24} \end{pmatrix} = 0.
\end{small}
$$
\end{example}

Our final technique was inspired by the solution to Shapiro's conjecture~\cite{sot}.

\begin{example} \label{ex:tangentlines}
Consider the lines $\ell = (1,2t, 3t^2, t^2, 2t^3,t^4)$ that are tangent
to the twisted cubic curve $\{(1:t:t^2:t^3)\}$. There is a surface
of quadrics tangent to all such lines. We choose
nine nearby lines, by slightly perturbing nine tangent lines.
Our fully real instance for $(\alpha,\beta,\gamma) = (0,9,0)$ was
found in this manner. 
%we already said the below sentence earlier. So let's not say it twice.
%We attempted to transfer this solution to $(\alpha,\beta,\gamma) = (1,8,0)$ by replacing one line by a point on or near that line. However, this did not yet lead to success.
\end{example}

\section{Numerical Methods}
\label{sec5}

We now explain our techniques for
solving the equations (\ref{eq:schub}) and
for certifying the correctness of their solutions.
Each instance is presented in the
Pl\"ucker coordinates of Remark \ref{rmk:flag}.
Following (\ref{eq:wedge23}) and Section \ref{sec3},
each line specifies a linear equation in $Y = \wedge_2 X$
and each plane gives a linear equation in $Z = \wedge_3 X$.

The numerical software {\tt HomotopyContinuation.jl} 
due to Breiding and Timme \cite{notices, julia} is easy to use,
even for those who are not yet familiar with {\tt julia}.
We now go over our steps in solving the system for the
instance in Example \ref{ex:333}. 

The input is a system of $11$ equations in $11$ unknowns, namely the ten
entries of the
matrix $X$ and one more variable $D$.
One equation is $D = \det (X)$, another specifies a random affine chart,
$\sum_{1 \leq i < j \leq 4} c_{ij} x_{ij} = 1$,
and the others are the  tangency conditions.
Our equations are entered into \ {\tt HomotopyContinuation.jl}:
 \begin{figure}[h]
	\centering
	\vspace{-0.1in}
	\begin{small}
		\begin{BVerbatim}
Equations=System(vcat(Point_Conditions,
                      Line_Conditions,
                      Plane_Conditions,
                      det(X)-D, Affine_Chart)) 
 	\end{BVerbatim}
	\vspace{-0.14in}
	\end{small}
\end{figure}

\noindent After entering \ {\tt S=solve(Equations)},
   the following output appears:

\begin{figure}[h]
	\centering
	\vspace{-0.06in}	
	\begin{tiny}
		\begin{BVerbatim}	
Tracking 216 paths... 100% |||||||||||||||| Time: 0:00:11
# paths tracked:                  216
# non-singular solutions (real):  104 (104)
# singular endpoints (real):      84 (83)
# total solutions (real):         188 (187)
		\end{BVerbatim}
	\end{tiny}
\end{figure}
\noindent
This suggests that the program tracked $216 = 1^\alpha  2^\beta 3^\gamma$ 
paths from a total degree start system and that 
it found $104$ real nonsingular solutions.
The variable {\tt S} is a $104$-element array of solutions, each of which is an $11$-element array of floating point numbers. 
The first coordinate is {\tt D}, and the last ten are the coordinates of~{\tt X}.

The following code extracts the $17$-th element of {\tt S} and prints that quadric:

\begin{figure}[h]
	\centering
	\vspace{-0.06in}		
	\begin{small}
		\begin{BVerbatim}
quadric=solutions(S)[17]
@var x[1:4]
Quadric=expand(x'*(X(Equations.variables=>real(quadric)))*x)
\end{BVerbatim}
\end{small}
\begin{tiny}
\begin{BVerbatim}	


-2.974732003076*x2*x1-1.289476735251*x2*x3-10.97658863786*x3*x1+ 
+8.372046844711*x4*x1+8.886907306683*x4*x2+9.704839838537*x4*x3+ 
-5.810893956281*x1^2+2.645663598009*x2^2-5.046922439351*x3^2+0.6937980589394*x4^2
		\end{BVerbatim}
	\end{tiny}
\end{figure}
\noindent These {\tt julia} fragments give a first impression. The details may be found
 at (\ref{eq:ourwebsite}).

\smallskip

One key question about numerical output is
whether it can serve as a mathematical proof. How can we be sure
that the $104$ solutions are indeed solutions and moreover, 
that they are distinct, real, and nondegenerate?
This is addressed by the process of {\em a-posteriori} certification,
which generates an actual~proof.

We carry this out using the Krawczyk method, implemented by
Breiding, Rose and Timme \cite{BRT}. It is based
on interval arithmetic and is now available as a standard
feature in {\tt HomotopyContinuation.jl}.
We note that this implementation
 represents a significant advance
over Smale's $\alpha$-certification that was
used for the $3264$ real quadrics in \cite[Proposition 1]{notices}.
This advance has two aspects.
First, the new method in \cite{BRT} is much faster. Second,
  its output gives a bounding box, allowing us to easily certify that the quadrics are nondegenerate.
 
We now show how certification works for our instance. 
 The input is easy: \vspace{-0.15in}
\begin{figure}[H]
	\centering			
	\begin{small}
		\begin{BVerbatim}
C=certify(Equations,S)
	\end{BVerbatim}
	\end{small}
	\vspace{-0.1in}		
\end{figure}
\noindent The program creates a certificate {\tt C}, and 
it reports on that as follows:
\begin{figure}[H]
	\centering
	\vspace{-0.06in}	
	\begin{tiny}
		\begin{BVerbatim}		
CertificationResult
===================
• 104 solutions given
• 104 certified solutions (104 real)
• 104 distinct certified solutions (104 real)
		\end{BVerbatim}
	\end{tiny}
	\vspace{-0.07in}	
\end{figure}
\noindent The certificate {\tt C} is a list of $104$ lists of $22$ 
intervals $I_1,\ldots,I_{11},J_{1},\ldots,J_{11}$ 
in $\RR$.
The product $B=\prod_{i=1}^{11}(I_i+\texttt{im}\cdot J_i)$ is a box in
$ \mathbb{C}^{11} \simeq \RR^{22}$. That box
  provably contains a unique solution to {\tt Equations}, verified by interval arithmetic.

  Checking that these boxes are disjoint proves that the $104$ solutions are distinct. 
  Checking that $B$ is the only box which intersects the complex conjugate of $B$ itself proves that
  this solution is real.  Checking that $0$ is not contained in $I_1$, the interval for the unknown {\tt D}
   proves that the quadric is nondegenerate. 
   
The following command displays the  certifying box $B$ for the $17$-th quadric:
\begin{figure}[H]
	\centering 
		\vspace{-0.1in}
	\begin{small}
		\begin{BVerbatim}
C.certificates[17].certified_solution
\end{BVerbatim}
\end{small}
\begin{tiny}
\begin{BVerbatim}		


(1.459827495775684e-6 ± 2.2938e-14) + (0.0 ± 2.2938e-14)im
(-0.9684823260468921 ± 1.516e-09) + (0.0 ± 1.516e-09)im
(-0.24789433358973637 ± 2.2975e-11) + (0.0 ± 2.2975e-11)im
(0.44094393300164797 ± 1.1016e-09) + (0.0 ± 1.1016e-09)im
(-0.9147157198219121 ± 1.3088e-09) + (0.0 ± 1.3088e-09)im
(-0.10745639460424983 ± 3.3522e-10) + (0.0 ± 3.3522e-10)im
(-0.8411537398918771 ± 1.0251e-09) + (0.0 ± 1.0251e-09)im
(0.6976705703926359 ± 9.7633e-10) + (0.0 ± 9.7633e-10)im
(0.7405756088903332 ± 1.1508e-09) + (0.0 ± 1.1508e-09)im
(0.8087366532114602 ± 1.202e-09) + (0.0 ± 1.202e-09)im
(0.11563300982325174 ± 7.2217e-10) + (0.0 ± 7.2217e-10)im
		\end{BVerbatim}
	\end{tiny}
\end{figure}

\begin{remark} Finding the fully real instances for  Theorem \ref{thm:inprogress} 
was a challenge.
We implemented a heuristic hill-climbing algorithm similar to the one in \cite{dietmaier}. The idea is to begin at some configuration $\mathcal C$ of $\alpha$ real points, $\beta$ real lines, and $\gamma$ real planes, solve the equations, and sample many nearby instances. If one has more real solutions, then $\mathcal C$ is updated to be that instance. Otherwise, the new $\mathcal C$ is the instance with the same number of real solutions, but whose complex solutions are closest to becoming real. This is measured by the minimum  norm of the complex parts of each nonreal solution. In this fashion, one greedily travels through the parameter space towards instances with  more real solutions. A major issue with such methods is that they get stuck in local maxima. Our success came from many iterations beginning at different randomly chosen parameters. A host of numerical tolerances determine the behavior of this algorithm. Once the number of real solutions approaches the maximum, the instances often become so ill-conditioned that serious monitoring of these tolerances is required.
\end{remark}

\section{Schubert's Pyramid}
\label{sec6}

We now finally come to the analogue in $\PP^3$ of the number $3264$.
The following conjecture motivated this project. We hope that
it can be resolved in the future.

\begin{conjecture} \label{conj:big}
There exist nine quadrics in $\PP^3_\RR$ such that all
$666841088$ complex quadrics that are tangent to these nine
are defined over the real numbers~$\RR$.
\end{conjecture}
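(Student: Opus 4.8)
We sketch one route towards Conjecture \ref{conj:big}, imitating the resolution of the analogous statement for the $3264$ conics \cite{ronga, notices}. The plan is to degenerate each of the nine quadrics to a complete flag via Proposition \ref{prop:hurwitz}. Fix nine invertible real matrices $V_1,\dots,V_9$; let $P_i\subset L_i\subset H_i$ be the flag spanned by the first three rows of $V_i$, and form $U_{i,\epsilon}=V_i^{-1}\,\mathrm{diag}(\epsilon^3,\epsilon^2,\epsilon,1)\,(V_i^{-1})^T$ as in (\ref{eq:deformation}). By (\ref{eq:hurwitzfac}) we have $\Sigma(U_{i,\epsilon},X)=\epsilon^8 G_i(X)+O(\epsilon^9)$ with $G_i=(P_iXP_i^T)^2\det(L_iXL_i^T)^2\det(H_iXH_i^T)^2$. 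Hence, inside the chart $\det(X)\neq 0$ and for generic flags, the $q^9$ solutions of the perturbed Schubert system (\ref{eq:hardsystem}) with $U_i=U_{i,\epsilon}$ converge as $\epsilon\to 0$ to the solutions of the $3^9$ \emph{subproblems}: for each selection $\pi\colon\{1,\dots,9\}\to\{p,\ell,h\}$, flag $i$ contributes its point, line, or plane according to $\pi(i)$, and one solves the corresponding instance of (\ref{eq:schub}). A $\pi$ of type $(\alpha,\beta,\gamma)$ contributes $p^\alpha\ell^\beta h^\gamma$ solutions, each with multiplicity $2^9$ since every factor of $G_i$ is a square; summing over the $\binom{9}{\alpha,\beta,\gamma}$ selections of each type reproduces the count $q^9=2^9(p+\ell+h)^9$ from (\ref{eq:q^9}). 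So for $\epsilon$ small the $q^9$ tangent quadrics split into clumps of $2^9$, one around each solution of each subproblem, and full reality reduces to two things: making every subproblem fully real, and showing each clump is real.

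I would proceed in four steps. \emph{Step 1.} Complete Theorem \ref{thm:inprogress}: upgrade the five cases of Remark \ref{rmk:notyet}, in particular the line-rich $(1,7,1)$ and $(1,8,0)$, from maximal to full reality, by a further hill-climbing search as in Section \ref{sec5} or by a secondary degeneration within Schubert's triangle. \emph{Step 2.} Prove simultaneous realizability: for a configuration $\mathcal{F}=(V_1,\dots,V_9)$, full reality of subproblem $\pi$ is an open condition $\Omega_\pi$ on flag space, and each $\Omega_\pi$ is nonempty by Step 1 because the nine selected figures range freely over all configurations of their type; one must show $\bigcap_\pi\Omega_\pi\neq\emptyset$, i.e.\ that a single configuration of nine flags works for all $3^9$ selections at once. \emph{Step 3.} Show each clump is real. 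Near a real solution $X_0$ of a subproblem with $\pi(i)$ a point, one has $G_i=g_i^2\,m_i$ with $g_i=P_iXP_i^T$ vanishing at $X_0$ and $m_i>0$ a square, so $\Sigma(U_{i,\epsilon},X)=0$ reads $g_i^2 m_i+\epsilon\,G_i'(X)+\cdots=0$ and $g_i\approx\pm\sqrt{-\epsilon\,G_i'(X_0)/m_i(X_0)}$ is real exactly when $\epsilon\,G_i'(X_0)<0$; the cases of lines and planes are analogous. One must choose the deformations---the matrices $V_i$, the sign of $\epsilon$, possibly independent small parameters $t_1\ll t_2\ll t_3$ in place of $\epsilon^3,\epsilon^2,\epsilon$---so that all nine signs are favorable at every one of the finitely many $X_0$, for every $\pi$. \emph{Step 4.} With an explicit $\mathcal{F}$ and an explicit small $\epsilon_0>0$ in hand, solve (\ref{eq:hardsystem}) for $U_{1,\epsilon_0},\dots,U_{9,\epsilon_0}$ with \texttt{HomotopyContinuation.jl} and certify all $666841088$ solutions real and nondegenerate by the Krawczyk method \cite{BRT}, exactly as in Section \ref{sec5}.

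The main obstacle is the interaction of Steps 2 and 3. The squared factors make the clumps delicate: a single unfavorable sign at one limiting solution of one selection destroys full reality. Yet the deformation data is shared, so the many sign conditions of Step 3 must be satisfied simultaneously across all $3^9$ selections using only the nine matrices $V_i$ and the scaling parameters. This sign analysis was already the heart of the $3264$-conic argument \cite{ronga}, and here both the combinatorial width and the line-rich cases of Remark \ref{rmk:notyet} make it substantially harder; it is conceivable that no degeneration of the shape (\ref{eq:deformation}) suffices, and a more elaborate family of degenerate quadrics would then be needed. A secondary difficulty is effectivity: a direct numerical attack on (\ref{eq:hardsystem})---nine equations of degree $12$ with $666841088$ solutions---is out of reach today, which is why the degeneration is essential, but even granting the asymptotics the instances near the degeneration are severely ill-conditioned, as noted at the end of Section \ref{sec5}, so certifying $6.7\times 10^8$ solutions at an honest $\epsilon_0$ will demand careful numerical engineering.
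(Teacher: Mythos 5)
The statement you are attacking is not proved in the paper: it is Conjecture \ref{conj:big}, which the authors explicitly leave open, offering only a strategy in Section \ref{sec6}. Your proposal reproduces essentially that same strategy --- degenerate the nine quadrics to nine real flags via (\ref{eq:deformation}), reduce to the $3^9$ systems (\ref{eq:manysystems}) attached to the flags, use Proposition \ref{prop:hurwitz} to see each limit solution split into $2^9$ tangent quadrics, and recover $q^9 = 2^9(p+\ell+h)^9$ --- and your Step 2 is exactly the paper's Conjecture \ref{conj:fullyreal}, while your Step 1 is the completion of Theorem \ref{thm:inprogress}. So as a research program your sketch is faithful to the authors' intent (they likewise propose performing the splitting ``in stages, from the bottom to the top of the pyramid'' and hope to control reality via Ronga--Tognoli--Vust \cite{ronga}), but it is not a proof, and you are right to present it only as a plan.

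The concrete gaps are the ones you partially flag, and they are not minor. Step 1 and Step 2 are open (Step 2 is itself a conjecture in the paper, and nonemptiness of each $\Omega_\pi$ separately does not give nonemptiness of the intersection of $3^9$ open sets; no argument is offered for that). Step 3 is the real mathematical heart and your treatment of it is heuristic: near a limit solution $X_0$ all nine equations $\Sigma(U_{i,\epsilon},X)=0$ degenerate simultaneously, so the branch structure is governed by a nine-dimensional multiparameter perturbation problem, not by nine independent one-variable square-root extractions; the leading-term formula (\ref{eq:hurwitzfac}) only identifies the order-$\epsilon^8$ term, and whether the $2^9$ branches are real depends on higher-order data of the specialized Hurwitz form that you have not computed, and on whether the sign conditions can be met simultaneously for all limit solutions of all $3^9$ subproblems using only the shared data $V_1,\dots,V_9$ and the scaling. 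You acknowledge this may force abandoning the ansatz (\ref{eq:deformation}) altogether, which is an admission that the key idea is missing rather than merely technical. Finally, Step 4 (certifying $666841088$ solutions near an ill-conditioned degeneration) is, as you say and as the paper says, currently out of reach. In short: your proposal correctly reconstructs the paper's intended route, but the conjecture remains unproved by it, and you should not present the $2^9$-splitting reality argument as established.
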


We propose a combinatorial gadget for approaching this problem. 
Schubert's pyramid is a tetrahedron of $220$ intersection
numbers $p^\alpha \ell^\beta h^\gamma q^\delta$, where
$(\alpha,\beta,\gamma,\delta) \in \NN^4$ with
$\alpha+\beta+\gamma+\delta = 9$.
Here $q = 2 (p+\ell+h)$ denotes the cohomology class of the complete
quadrics tangent to a given quadric in $\PP^3$.
Thus the pyramid organizes the number of quadrics
tangent to nine figures, as in Figure~\ref{fig:levels}.

\begin{figure}[ht]
  \centering
  \includegraphics[scale=0.42]{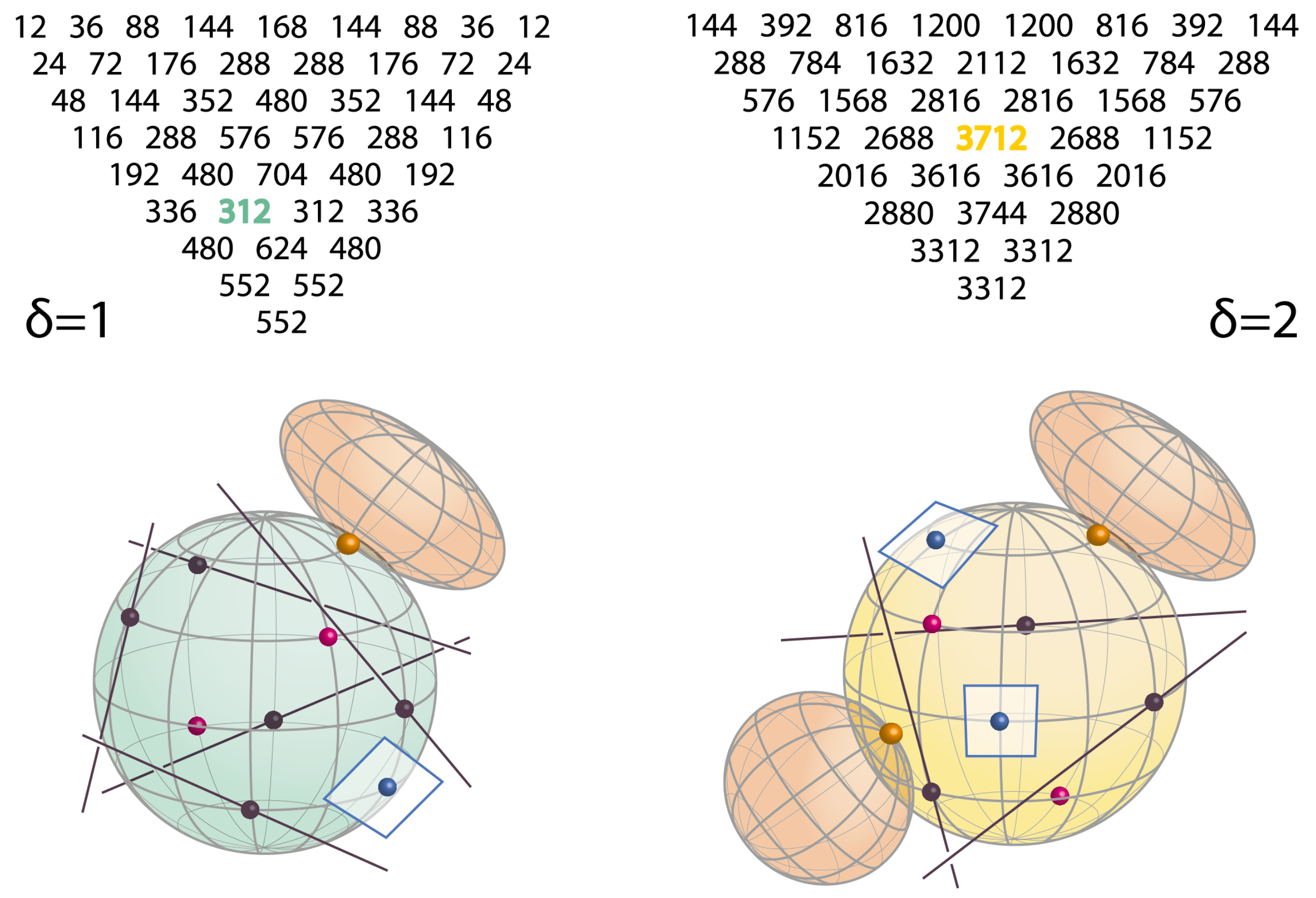}
   \vspace{-0.09in}
  \caption{Two consecutive levels in Schubert's pyramid}
    \label{fig:levels}
\end{figure}

The levels in Schubert's pyramid are the triangles for fixed $\delta$. 
Each entry in level $\delta$ is twice the sum of the three entries
in level $\delta-1$ that lie below it. For instance, for $\delta=2$
we marked $3712 = 2 \cdot (576+576+704)$. This counts~quadrics through two points that are tangent to three lines,
two planes and two quadrics.

Making Schubert's triangle fully real is only a first step towards Conjecture~\ref{conj:big}.
What we really want is to find one single instance of nine real flags:
\begin{equation}
\label{eq:nineflags}
P_1 \subset L_1 \subset H_1\,,\,\,
P_2 \subset L_2 \subset H_2\,,\, \ldots\,, \,\,
P_9 \subset L_9 \subset H_9 . 
\end{equation}
We want those nine flags to exhibit full reality,
simultaneously for all their many tangency problems.
Such a configuration (\ref{eq:nineflags}) would be the 
$3$-dimensional analogue to the
pentagon in \cite[Figure 3]{notices}.
To state this precisely, we consider an arbitrary
function $\psi: \{1,2,\ldots,9\} \rightarrow \{P,L,H\}$. This
defines a polynomial system 
\begin{equation}
\label{eq:manysystems}
 {\rm det}\bigl( \psi(i)_i \cdot X \cdot \psi(i)_i^T\bigr) \,= \, 0  \quad {\rm for}\,\, \,\,i=1,2,\ldots, 9. 
 \end{equation}
This has the form (\ref{eq:schub}),
where $\alpha = |\psi^{-1}(P)|$, $\beta = |\psi^{-1}(L)|$,
and $\gamma = |\psi^{-1}(H)|$. Thus, an instance (\ref{eq:nineflags}) of nine flags
gives a collection of $3^9$ polynomial systems. For each of these, the
number of solutions is one entry in Schubert's triangle.

\begin{conjecture} \label{conj:fullyreal}
There exist nine real flags (\ref{eq:nineflags}) in $\PP^3$ such that
each complex solution $X$ to any of the $3^9$ 
associated polynomial systems (\ref{eq:manysystems}) is a real quadric.
\end{conjecture}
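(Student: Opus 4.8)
In the spirit of Example \ref{ex:tangentlines} and of the resolution of Shapiro's conjecture \cite{sot}, the most natural guess is to let the nine flags in (\ref{eq:nineflags}) be the \emph{osculating flags} of the rational normal curve $\gamma(t) = (1:t:t^2:t^3)$ at nine distinct real parameters $t_1 < \dots < t_9$ --- so $P_i = \gamma(t_i)$, $L_i$ the osculating line, $H_i$ the osculating plane at $t_i$ --- or a small real perturbation thereof, as in Example \ref{ex:tangentlines}. The first step is to verify, using the certified numerical machinery of Section \ref{sec5} applied now to the whole family (\ref{eq:manysystems}) rather than a single system (\ref{eq:schub}), that nine such flags render all $3^9$ systems fully real. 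The $S_9$-action permuting flags and the involution swapping $P$ with $H$ collapse these systems into roughly thirty genuinely distinct tangency problems; what resists such reduction is the requirement that one fixed tuple $(t_1,\dots,t_9)$ work for all of them simultaneously.

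\textbf{Towards a proof, and the main obstacle.} For a conceptual proof one would want to import the apparatus that established Shapiro's conjecture for osculating \emph{linear} Schubert conditions --- the Wronski map, the Gaudin model, the Mukhin--Tarasov--Varchenko theorem. The principal obstacle, I expect, is that the tangency conditions here are \emph{quadratic}, and their natural ambient space is the space of complete quadrics of Theorem \ref{thm:CQ}, a smooth spherical variety rather than a Grassmannian, so none of that representation-theoretic machinery transfers directly. Absent such an argument one resorts to the heuristic hill climbing of Section \ref{sec5}, now carried out in the $54$-dimensional real variety $\mathcal{F}^9$, where $\mathcal{F}$ is the flag variety of Remark \ref{rmk:flag}, with objective function the total defect from reality --- the sum over all $3^9$ systems (or over the single worst one) of the norms of the imaginary parts of the nonreal solutions --- started from structured seeds such as osculating or coordinate-adapted flags. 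The difficulties are those already flagged in Section \ref{sec5}, namely local maxima and severe ill-conditioning near the optimum, now aggravated by the tension between the $3^9$ objectives: improving one subsystem can push another away from full reality.

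\textbf{The degeneration route.} Proposition \ref{prop:hurwitz} links Conjecture \ref{conj:fullyreal} to Conjecture \ref{conj:big}. If Conjecture \ref{conj:big} could be realized by nine quadrics of the special form $U_{i,\epsilon} = V_i^{-1} \cdot {\rm diag}(\epsilon^3,\epsilon^2,\epsilon,1) \cdot (V_i^{-1})^T$ with $V_i \in {\rm GL}_4(\RR)$, and with full reality of (\ref{eq:hardsystem}) persisting along a sequence $\epsilon \to 0$, then Conjecture \ref{conj:fullyreal} would follow: by Proposition \ref{prop:hurwitz} the $666841088$ solutions of (\ref{eq:hardsystem}) accumulate, as $\epsilon \to 0$, onto the solutions of the limiting systems (\ref{eq:manysystems}) for the nine flags spanned by the first three rows of the $V_i$, each solution of a type-$(\alpha,\beta,\gamma)$ subsystem being attained with multiplicity $2^9$; reality of all solutions of (\ref{eq:hardsystem}) along the sequence then forces every such limit, hence every solution of every system (\ref{eq:manysystems}), to be a real quadric. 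The obstruction is that one must first control Conjecture \ref{conj:big}, which is no easier.

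\textbf{Why this is not a formal consequence of Theorem \ref{thm:inprogress}.} For each of the $46$ solved triangle problems the fully real locus is a nonempty open subset of the corresponding parameter space, and the forgetful map $\mathcal{F}^9 \to (\PP^3)^\alpha \times (\PP^5)^\beta \times (\PP^3)^\gamma$ that, for a fixed $\psi$ of fibre type $(\alpha,\beta,\gamma)$, records the relevant points, lines and planes is a smooth surjection; it therefore pulls that open set back to a nonempty open set of flag configurations. But these pullbacks, ranging over all $3^9$ functions $\psi$, need not share a common point, because the fully real chamber of an individual tangency problem is in general not dense. Settling the five remaining problems of Remark \ref{rmk:notyet} --- for which a single fully real instance already suffices, since full reality is an open condition once the solutions of (\ref{eq:schub}) are simple --- is thus a prerequisite but not the whole story; the real content of Conjecture \ref{conj:fullyreal} lies in achieving all $3^9$ reality conditions at once.
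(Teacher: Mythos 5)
The statement you were given is Conjecture \ref{conj:fullyreal}, and the paper does not prove it: it is stated as an open problem, supported only by the partial evidence of Theorem \ref{thm:inprogress} and the numerical experiments of Section \ref{sec5}. Your text is likewise not a proof. It is a (largely accurate) survey of candidate configurations and strategies: osculating flags of the twisted cubic as in Example \ref{ex:tangentlines}, the observation that the representation-theoretic machinery behind Shapiro's conjecture does not transfer because the tangency conditions are quadratic and live on the space of complete quadrics rather than a Grassmannian, and hill-climbing over $\mathcal{F}^9$. You also correctly note that the conjecture is not a formal consequence of Theorem \ref{thm:inprogress}, since the fully real open sets for the individual $(\alpha,\beta,\gamma)$ problems, pulled back to the space of nine flags, need not have a common point, and that the five unresolved cases of Remark \ref{rmk:notyet} are a prerequisite. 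But no step of your argument establishes the existence of the required nine flags, so the genuine gap is simply that nothing is proved: the conjecture is exactly as open after your text as before, which matches the status it has in the paper.

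One substantive comparison on your degeneration route: the paper uses Proposition \ref{prop:hurwitz} in the opposite direction. Assuming Conjecture \ref{conj:fullyreal}, it proposes to deform the nine real flags into nine smooth quadrics via (\ref{eq:deformation}) and to lift the $(p+\ell+h)^9 = 1302424$ real solutions, each splitting into $2^9$ solutions, as an approach to Conjecture \ref{conj:big}. Your implication runs the other way: a fully real instance of (\ref{eq:hardsystem}) by quadrics of the special form $U_{i,\epsilon}$, with full reality persisting as $\epsilon \to 0$, would force reality of all solutions of all systems (\ref{eq:manysystems}) for the limiting flags, since real solutions have real limits in the compact space $\PP^9_\RR$ and the multiplicity count behind $q^9 = 2^9(p+\ell+h)^9$ guarantees that every solution of every flag system is such a limit. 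That deduction is reasonable, but as you concede it presupposes a strengthened form of Conjecture \ref{conj:big}, which is at least as hard as the statement to be proved, so it cannot close the gap either.
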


If Conjecture~\ref{conj:fullyreal} is true, then we can
approach Conjecture \ref{conj:big} as follows.
We are given $\,(p+\ell+h)^9 = 1302424$ real quadrics $X$ that solve
the $3^9$ systems. Each solution becomes $2^9$ distinct solutions
 under the deformation in Proposition  \ref{prop:hurwitz}, where
 the nine flags for $\epsilon=0$  become nine smooth quadrics for $\epsilon > 0$.

This process can be performed in stages, from the bottom to the top of the pyramid,
but its numerical implementation will not be easy. One hope is that reality can be
controlled using the results by Ronga, Tognoli and Vust in \cite{ronga}.

\bigskip \bigskip

\noindent {\bf Acknowledgements.} We thank Sascha Timme
for his important contributions.

\begin{small}

\end{small}

\end{document}